\renewcommand{\H}{\mathcal{H}}
\newcommand{\F}{\mathcal{F}}
\renewcommand{\P}{\operatorname{proj}}
\DeclareMathOperator{\dom}{dom}
\newtheorem{assumption}{Assumption}
\begin{document}

\title{Tikhonov-like regularization of  dynamical systems associated with nonexpansive operators defined in closed and convex sets
}

\titlerunning{Tikhonov-like regularization of  dynamical systems associated with nonexpansive operators}        

\author{Pedro P\'erez-Aros \and Emilio Vilches \thanks{E. Vilches was partially funded by CONICYT Chile under grants Fondecyt de Iniciaci\'on N$^{\circ}$ 11180098 and Fondecyt regular N$^{\circ}$ 1200283, P. P\'erez-Aros was partially supported by CONICYT Chile under grants Fondecyt regular N$^{\circ}$ 1190110 and Fondecyt regular N$^{\circ}$ 1200283 and Programa Regional Mathamsud 20-Math-08 CODE: MATH190003.}}



\institute{ 
              P. P\'erez-Aros \at
              Instituto de Ciencias de la Ingenier\'ia, Universidad de O'Higgins, Rancagua, Chile \\
              \email{pedro.perez@uoh.cl}           \and E. Vilches \at
              Instituto de Ciencias de la Ingenier\'ia, Universidad de O'Higgins, Rancagua, Chile\\
              \email{emilio.vilches@uoh.cl}
              }

\date{Received: date / Accepted: date}

\maketitle

\begin{abstract}
In this paper, we propose a new Tikhonov-like regularization for dynamical systems associated with non-expansive operators defined in closed and convex sets of a Hilbert space. We prove the well-posedness, the invariance, and the strong convergence of the proposed dynamical system to a fixed point of the non-expansive operator. We apply the obtained result to a dynamical system associated with the problem of finding the zeros of the sum of a cocoercive operator with the subdifferential of a convex function. 
\keywords{Dynamical systems \and  Nonexpansive operator \and Tikhonov regularization} \and Cocoercive operator
 \subclass{34G25 \and  47A52 \and 47H05 \and  47J35 \and 90C25}
\end{abstract}

\section{Introduction}\label{intro}

Let $D$ be a closed and convex set of a Hilbert space $\H$. In this paper, we are interested in the study of a  { Tikhonov-like regularization} of the following dynamical system
\begin{equation}\label{DyS}
\left\{
\begin{aligned}
-\dot{x}(t)&=x(t)-T(x(t))& \textrm{ a.e. } t\geq 0,\\
x(0)&=x_0\in D,
\end{aligned}
\right.
\end{equation}
where $T\colon D\to D$ is a non-expansive operator (see Definition \ref{Def-Nonexp} below) and $x_0\in D$.  The consideration of this dynamical system is motivated by the study of the set of fixed points of the operator $T$. Indeed, every equilibrium point of the dynamical system \eqref{DyS} is a fixed point of $T$.  The framework \eqref{DyS} includes the dynamical system proposed by Antipin in \cite{Antipin} 
 \begin{equation}\label{DyS-P}
\left\{
\begin{aligned}
-\dot{x}(t)&=x(t)-\operatorname{proj}_{C}\left(x(t)-\mu \nabla \varphi(x(t))\right)& \textrm{ a.e. } t\geq 0,\\
x(0)&=x_0,
\end{aligned}
\right.
\end{equation}
where $\varphi\colon \H \to \mathbb{R}$ is a convex $C^1$ function defined on a real Hilbert space $\H$, $C$ is a nonempty closed and convex set of $\H$, $x_0\in \H$ and  $\mu>0$. In this context, it was shown in \cite{Bolte2003} that the trajectory of \eqref{DyS-P} converges weakly to a minimizer of the optimization problem 
\begin{equation*}
\inf_{x\in C}\varphi(x),
\end{equation*}
provided that the latter is solvable. Latter, Abbas and Attouch \cite{Abbas2015} considered the following generalization of the dynamical system \eqref{DyS-P}
 \begin{equation}\label{DyS-Prox}
\left\{
\begin{aligned}
-\dot{x}(t)&=x(t)-\operatorname{prox}_{\mu \Phi}\left(x(t)-\mu B(x(t))\right)& \textrm{ a.e. } t\geq 0,\\
x(0)&=x_0,
\end{aligned}
\right.
\end{equation}
where $\Phi\colon \H \to \mathbb{R}\cup \{+\infty\}$ is a proper, convex and lower semicontinuous functions defined on a real Hilbert space $\H$, $B\colon \H \to \H$ is a cocoercive operator, $x_0\in \H$, $\mu >0$ and $\operatorname{prox}_{\mu \Phi}\colon \H \to \H$
\begin{equation*}
\operatorname{prox}_{\mu \Phi}(x):=\operatorname{argmin}_{y\in \H}\left\{ \Phi(y)+\frac{1}{2\mu}\Vert y-x\Vert^2\right\},
\end{equation*}
denotes the proximal point operator of $\Phi$.
Finally, in \cite{Bot2017}, the authors prove the weak convergence of the orbits of the dynamical system \eqref{DyS} to a fixed point for the operator $T\colon \H \to \H$, extending the results mentioned above.  

\noindent In this paper, we propose the following variant of the dynamical system \eqref{DyS}
\begin{equation}\label{DyS-Tik}
\left\{
\begin{aligned}
-\dot{x}(t)&=x(t)-T_t(x(t))+\varepsilon(t)(x(t)-y(t))& \textrm{ a.e. } t\geq 0,\\
x(0)&=x_0\in D,
\end{aligned}
\right.
\end{equation}
where $(T_t)_{t>0}$ is a family of  nonexpansive operator from $D$ into $D$ approaching a nonexpansive operator $T\colon D\to D$, as $t\to +\infty$, $y: [0,\infty) \to  D$ and $\varepsilon\colon [0,\infty)\to [0,\infty)$ are appropriate functions. The system \eqref{DyS-Tik} corresponds to a Tikhonov-like regularization of the dynamical system \eqref{DyS}. This kind of regularization has been considered by several authors (see, e.g., \cite{Attouch_Cominetti_1996,CPS2008}). In \cite{CPS2008}, the authors consider the system
\begin{equation}\label{MMO}
-\dot{x}(t)\in Ax(t)+\varepsilon(t)x(t),
\end{equation}
where $A$ is a maximal monotone operator defined on a Hilbert space and $\varepsilon(t)$ tends to $0$ as $t\to +\infty$ with $\int_0^{+\infty}\varepsilon(s)ds=+\infty$. They prove the strong convergence  towards the least-norm point in $A^{-1}(0)$ provided that the function $\varepsilon(t)$ has bounded variation.  This setting includes the operator $A=I-T$ when $T$ is defined in the whole Hilbert space $\H$. However, when the operator $T$ is defined only in a closed convex subset $D\subset \H$ with $D\neq \H$, the operator $I-T$ is not necessarily maximal monotone, and, thus, it is not clear  that the dynamical system \eqref{MMO} is well defined. 
 The main contributions of this paper is to prove that, under mild assumptions, the system \eqref{DyS-Tik} is well defined on $D$ (see Proposition \ref{Existencia-Tikhonov}) and that the orbits of \eqref{DyS-Tik} converge strongly to the point $\operatorname{proj}_{\operatorname{Fix}T}(y)$ provided the set $\operatorname{Fix}T$ is nonempty (see Theorem \ref{main}).  

We emphasize that our motivation to study the system \eqref{DyS-Tik}, which is governed by a family of time-dependent nonexpansive operators rather than a fixed one, is twofold. On the one hand, it enables accelerating the convergence towards a fixed point. On the other hand, it establishes a stability result with respect to perturbations on the initial operator, which may be helpful from the numerical point of view.

The paper is organized as follows. In Section \ref{Preliminaries}, we set the notation of the paper and prove preliminary results on non-expansive operators. In Section \ref{Nonexpansive}, we present the main properties of the dynamical system \eqref{DyS}. In Section \ref{Tikhonov-Sec}, we present the main result of the paper (see Theorem \ref{main}); namely, the strong convergence of the trajectories of the dynamical system \eqref{DyS-Tik} to a point in the set $\operatorname{Fix}T$. Then, we give some applications of the main result to the dynamical system \eqref{DyS-Prox}. The paper ends with conclusions and final remarks.

\section{Notation and preliminaries}\label{Preliminaries}
Let $\H$ be a Hilbert space endowed with a scalar product $\langle \cdot,\cdot\rangle$ and unit ball $\mathbb{B}$. 
\noindent Given a nonempty, closed and convex set $S\subset \H$, we define the distance function $d_S$ and the projection over $S$ as the maps
\begin{equation*}
\begin{aligned}
d_S(x)&:=\inf_{y\in S}\Vert y-x\Vert   & \textrm{ and } \quad \operatorname{proj}_S(x):=\{y\in \H\colon d_S(x)=\Vert x-y\Vert \}.
\end{aligned}
\end{equation*}
For $S$ as a above, it is not difficult to prove that the map $x\mapsto d_S^2(x)$ is differentiable with 
\begin{equation*}
\begin{aligned}
\nabla d_S^2(x) &=2\left(x-\operatorname{proj}_S(x)\right) & \textrm{ for all } x\in \H.
\end{aligned}
\end{equation*}  Moreover, the following inequality holds
\begin{equation}\label{inc-convexo}
\begin{aligned}
\left\langle x-\operatorname{proj}_S(x),y-\operatorname{proj}_S(x) \right\rangle \leq 0  & \textrm{ for all } y\in S.
\end{aligned}
\end{equation}
We refer to \cite{BC2017} for more details.

Let $\Phi\colon \H \to \mathbb{R}\cup \{+\infty\}$ be a proper, convex and lower semicontinuous function and $\mu>0$. The proximal point operator of $\Phi$ is defined as
\begin{equation}\label{proximal-point}
\operatorname{prox}_{\mu \Phi}(x):=\operatorname{argmin}_{y\in \H}\left\{ \Phi(y)+\frac{1}{2\mu}\Vert y-x\Vert^2\right\}.
\end{equation}
The proximal point operator is everywhere well defined and the map $x\mapsto \operatorname{prox}_{\mu \Phi}(x)$ is Lipschitz of constant $1$ (see \cite[Section~12.4]{BC2017}).  Moreover, when $\Phi$ is the indicator function of a closed and convex set $C$, then the proximal point operator coincides with the projection operator over $C$, that is,  $\operatorname{prox}_{\mu \Phi}(x)=\operatorname{proj}_C(x)$.

The proximal point operator plays a fundamental role in optimization theory. Indeed, this operator is the basis of several optimization algorithms (see, e.g., \cite{BC2017}). Moreover, it is well known (see, e.g.,  \cite[Proposition~12.29]{BC2017}) that the set of  fixed points of this operator coincides with the set of solution of the problem 
$$
\inf_{x\in \H} \Phi(x).
$$
\noindent The following definitions will be used throughout the paper.
\begin{definition}\label{Def-Nonexp}
 An operator $T: D \subset \H \to  \H$ is called
\begin{enumerate}
\item $\beta$-cocoercive on $D$, if 
	\begin{equation*}
	\begin{aligned}
	\langle T(x) -T(y) ,x-y \rangle &\geq \beta \| T(x)- T(y)\|^2 & \textrm{ for all } x,y\in D.
	\end{aligned}
	\end{equation*}
\item Non-expansive on $D$ if 
	\begin{equation*}
	\begin{aligned}
	\| T(x) -T(y)\| &\leq \| x- y\|  & \textrm{ for all } x,y\in D.
	\end{aligned}
	\end{equation*}
\item Firmly non-expansive on $D$ if 
	\begin{equation*}
	\begin{aligned}
	\| T(x) -T(y)\|^2 + \|( \operatorname{Id}-T)(x) - ( \operatorname{Id}-T)(y) \|^2  &\leq \| x- y\|^2& \textrm{ for all } x,y\in D.
	\end{aligned}
	\end{equation*}
	
\end{enumerate}
\end{definition}
It is important to mention that if $D$ is closed and convex and $T\colon D \to D$ is non-expansive, then the set $\operatorname{Fix}T$ is closed and convex (see, e.g.,  \cite[Corollary~4.24]{BC2017}). Moreover, if, in addition, $D$ is bounded, the \emph{Browder-G\"{o}hde-Kirk's Theorem} (see, e.g., \cite[Theorem 4.29]{BC2017}) asserts that the set $\operatorname{Fix}T$ is nonempty. On the other hand,  if $T$ is $\alpha$-Lipschitz with $\alpha\in [0,1)$, then $\operatorname{Fix}T$ is a singleton.

Let us consider a non-expansive operator $T\colon D\to D$ and define the operator $G\colon D\to \H$ given by
\begin{equation}\label{operator-G}
G(x)=x-T(x).
\end{equation}
It is clear that the set of fixed point of $T$ coincides with the set of zeros of $G$. Moreover, according to \cite[Proposition~4.4]{BC2017}, if the operator $T$ is non-expansive, then $T$ is monotone. The following lemma gives the existence of approximate zeros of $G$.
\begin{lemma}\label{Proposition2.1}
Assume that $T\colon D\to D$ is non-expansive and fix $y\in D$. Then, for every $\epsilon>0$ there exists a unique $x_\epsilon^y \in D$ such that
	\begin{align}\label{unique_solution}
	\epsilon x_\epsilon^y + G(x_\epsilon^y) =\epsilon y.
	\end{align}
	\end{lemma}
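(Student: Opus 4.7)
The plan is to recast the equation $\epsilon x + G(x) = \epsilon y$ as a fixed-point equation for a strict contraction on $D$ and then invoke the Banach contraction principle. Substituting $G(x) = x - T(x)$ and rearranging, the equation is equivalent to
\begin{equation*}
x = \frac{1}{1+\epsilon}\bigl(T(x) + \epsilon y\bigr),
\end{equation*}
so the task reduces to showing that the map $S_\epsilon\colon D \to \H$ defined by $S_\epsilon(x) := \tfrac{1}{1+\epsilon} T(x) + \tfrac{\epsilon}{1+\epsilon} y$ has a unique fixed point in $D$.

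First I would verify that $S_\epsilon(D)\subset D$. Since $T(x)\in D$ and $y\in D$ by hypothesis, and since $\tfrac{1}{1+\epsilon}+\tfrac{\epsilon}{1+\epsilon}=1$ with both coefficients in $[0,1]$, the convexity of $D$ guarantees $S_\epsilon(x)\in D$ for every $x\in D$. Next, for $x,x'\in D$, the non-expansiveness of $T$ yields
\begin{equation*}
\|S_\epsilon(x) - S_\epsilon(x')\| = \frac{1}{1+\epsilon}\|T(x)-T(x')\| \leq \frac{1}{1+\epsilon}\|x-x'\|,
\end{equation*}
so $S_\epsilon$ is a Lipschitz map on $D$ with constant $\tfrac{1}{1+\epsilon}<1$ (using $\epsilon>0$). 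Because $D$ is a closed subset of the Hilbert space $\H$, it is a complete metric space, so the Banach fixed-point theorem gives a unique $x_\epsilon^y\in D$ with $S_\epsilon(x_\epsilon^y)=x_\epsilon^y$; unwinding the definition of $S_\epsilon$ recovers exactly \eqref{unique_solution}.

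There is no real obstacle here: the only subtlety worth flagging is that $D$ need not be bounded, so one cannot invoke Browder--G\"ohde--Kirk directly on some auxiliary operator; the strict contraction factor $\tfrac{1}{1+\epsilon}$ coming from the Tikhonov term $\epsilon x$ is precisely what makes Banach's theorem applicable and also delivers uniqueness for free.
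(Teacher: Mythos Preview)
Your argument is correct and essentially the same as the paper's: the paper simply quotes \cite[Proposition~4.30]{BC2017}, which is precisely the Banach contraction argument for the map $x\mapsto \eta y+(1-\eta)T(x)$, and then sets $\eta=\epsilon/(1+\epsilon)$, which is exactly your $S_\epsilon$. You have unpacked the cited result rather than invoking it, but the underlying mechanism is identical.
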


\begin{proof}
	Since the operator $T$ is non-expansive on $D$, we can apply \cite[Proposition 4.30]{BC2017}, to obtain that  for all $\eta \in (0,1)$ there exists a unique $x_\eta^y \in D$ such that 
	\begin{align*}
	x_\eta^y = \eta y + (1- \eta )  T(x_\eta^y).
	\end{align*}
	In particular, taking $\eta =\frac{\epsilon}{1+\epsilon }$, we obtain the existence of a unique 
	$x^y_\epsilon \in D$ such that 
	\begin{align*}
	x^y_\epsilon = \frac{\epsilon}{1+\epsilon } y + \left(1- \frac{\epsilon}{1+\epsilon } \right)  T(x^y_\epsilon),
	\end{align*}
	which implies the result. \qed
\end{proof}

Now, let us define $\F: (0,+\infty) \times D \to D$, given by
\begin{align}\label{defintionF}
\F(\epsilon,y)=x_\epsilon^y ,
\end{align} 
where $x_\epsilon^y$ is the unique solution of \eqref{unique_solution} given by Lemma \ref{Proposition2.1}.

The next results give us some properties of the trajectory $x_\epsilon^y$
\begin{lemma}\label{lemmacomb}
	Consider the function $\F$ defined in \eqref{defintionF}. Then,
	\begin{enumerate}[label=\roman*)]
		\item\label{lemmacomba} For all $\epsilon>0$ the function $\F(\epsilon,\cdot)$ is firmly nonexpansive on $D$.
		\item If $\operatorname{Fix}T=\emptyset$, then for all $y \in D$, $\lim\limits_{\epsilon \to 0^+ } \| \F (\epsilon,y) \| =+\infty$.
		\item For all $\epsilon>0$, and all $x^\ast \in  \operatorname{Fix}T$ 
		\begin{align*}
			\| y - \F(\epsilon, y) \|^2 +\|\F(\epsilon, y)-x^\ast \|^2 \leq \|y - x^\ast\|^2. 
		\end{align*}
		\item  If $\operatorname{Fix}T\neq \emptyset$, then
		\begin{align*}
			\lim\limits_{\epsilon \to 0^+} \F(\epsilon,y)= \P_{\operatorname{Fix}T}(y).
		\end{align*}
		\item For all $y \in D$, the function $\epsilon \to  \|y- \F(\epsilon,y)\|$ is decreasing.
		\item For all $y \in D$, the function $\F(\cdot,y)$ is continuous.
	\end{enumerate}
\end{lemma}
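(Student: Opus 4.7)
Throughout the plan I write $x_\epsilon^y := \F(\epsilon,y)$ and exploit the defining identity $T(x_\epsilon^y) = (1+\epsilon)x_\epsilon^y - \epsilon y$, an immediate rearrangement of \eqref{unique_solution}. The six items will be handled in the order (i), (iii), (ii), (iv), (v), (vi), so that each relies only on what comes before.

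For (i), I fix $y_1,y_2\in D$, set $x_i:=\F(\epsilon,y_i)$, and note that the identity above yields $T(x_1)-T(x_2)=(1+\epsilon)(x_1-x_2)-\epsilon(y_1-y_2)$. Squaring, invoking non-expansivity, and regrouping produces
\[
2\bigl(\|x_1-x_2\|^2-\langle x_1-x_2,y_1-y_2\rangle\bigr) + \epsilon\,\|(x_1-x_2)-(y_1-y_2)\|^2 \leq 0,
\]
from which the firm non-expansivity inequality is immediate. Part (iii) is then a direct corollary: for $x^*\in\operatorname{Fix}T$, the uniqueness in Lemma~\ref{Proposition2.1} gives $\F(\epsilon,x^*)=x^*$, so applying (i) with $y_2=x^*$ yields the claim. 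For (ii) I argue by contradiction: if $\liminf_{\epsilon\to 0^+}\|\F(\epsilon,y)\|<+\infty$, then along a suitable sequence $\epsilon_n\to 0^+$ the points $x_n:=\F(\epsilon_n,y)$ satisfy $\|x_n-T(x_n)\|=\epsilon_n\|y-x_n\|\to 0$. Any weak cluster point $\bar x\in D$ then satisfies $\bar x = T(\bar x)$ by the demiclosedness principle for nonexpansive operators (see, e.g., \cite[Corollary~4.28]{BC2017}), contradicting $\operatorname{Fix}T = \emptyset$.

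For (iv), the family $\{\F(\epsilon,y)\}_\epsilon$ is bounded by (iii), and every weak cluster point as $\epsilon\to 0^+$ belongs to $\operatorname{Fix}T$ by the argument of (ii). Taking $x^*=\P_{\operatorname{Fix}T}(y)$ in (iii) gives $\|y-\F(\epsilon,y)\|\leq \|y-\P_{\operatorname{Fix}T}(y)\|$, and weak lower semicontinuity of the norm forces any weak cluster point $\bar x$ to obey $\|y-\bar x\|\leq \|y-\P_{\operatorname{Fix}T}(y)\|$; uniqueness of the projection onto the closed convex set $\operatorname{Fix}T$ then gives $\bar x = \P_{\operatorname{Fix}T}(y)$. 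All weak cluster points coincide, so the whole family converges weakly, and substituting the resulting norm limit back into (iii) upgrades the convergence to strong.

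For (v) and (vi) the key substitution is $z_i:=y-x_{\epsilon_i}^y$, which together with the identity $T(x_{\epsilon_i}^y) = y-(1+\epsilon_i)z_i$ turns the non-expansivity bound $\|T(x_{\epsilon_1}^y)-T(x_{\epsilon_2}^y)\|^2\leq \|z_1-z_2\|^2$ into
\[
\epsilon_1(2+\epsilon_1)\|z_1\|^2 + \epsilon_2(2+\epsilon_2)\|z_2\|^2 \leq 2(\epsilon_1+\epsilon_2+\epsilon_1\epsilon_2)\langle z_1,z_2\rangle.
\]
For (v), Cauchy--Schwarz on the right-hand side produces a quadratic inequality in $\|z_1\|$ whose discriminant simplifies to $4(\epsilon_1-\epsilon_2)^2\|z_2\|^2$; the smaller root gives $\|z_1\|\geq \tfrac{2+\epsilon_2}{2+\epsilon_1}\|z_2\|\geq \|z_2\|$ whenever $\epsilon_1\leq \epsilon_2$. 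For (vi), the same algebra applied with $\epsilon_1=\epsilon_n\to\epsilon=\epsilon_2$ yields the estimate $\|\F(\epsilon_n,y)-\F(\epsilon,y)\|\leq \tfrac{2(1+\epsilon)|\epsilon_n-\epsilon|}{\epsilon(2+\epsilon)}\|\F(\epsilon_n,y)-y\|$, and the last factor is uniformly bounded for $\epsilon_n$ in a compact neighborhood of $\epsilon$ by the monotonicity already established in (v). The main obstacle is the bookkeeping in (v): identifying the substitution $z_i=y-x_{\epsilon_i}^y$ and extracting the explicit monotonicity ratio $(2+\epsilon_2)/(2+\epsilon_1)$ from the quadratic in $\|z_1\|$; once that identity is in place, (vi) is a short perturbation argument on exactly the same inequality.
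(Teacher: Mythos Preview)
Your proof is correct, but it differs substantially from the paper's treatment: the paper does not prove this lemma at all and simply cites \cite[Proposition~4.30]{BC2017}, where these properties of the Browder map $\eta\mapsto x_\eta^y$ are established in essentially the same form. What you supply instead is a fully self-contained argument built directly on the single identity $T(x_\epsilon^y)=(1+\epsilon)x_\epsilon^y-\epsilon y$ and the non-expansiveness of $T$: firm non-expansiveness in (i) from squaring that identity, (iii) as an immediate specialization, (ii) and (iv) via demiclosedness and the variational characterization of the projection, and the monotonicity/continuity in (v)--(vi) from the quadratic inequality in $\|z_i\|=\|y-x_{\epsilon_i}^y\|$. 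The gain of your route is independence from the reference and an explicit quantitative byproduct (the ratio $(2+\epsilon_2)/(2+\epsilon_1)$ in (v) and a Lipschitz-type bound in (vi)); the paper's route is shorter and leans on a standard text, and in fact the Lipschitz estimate you extract in (vi) is superseded later in the paper by the sharper bound of Proposition~\ref{Prop-AC}, obtained through the reparametrization identity of Lemma~\ref{identidad_F} rather than your quadratic-discriminant computation. Either path is valid; yours is more elementary but somewhat longer.
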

\begin{proof}
	See \cite[Proposition 4.30]{BC2017}. \qed
\end{proof}

The following result is fundamental to establish continuity properties for the map $\varepsilon \mapsto \mathcal{F}(\varepsilon,y)$ for $y\in D$ fixed.
\begin{lemma}\label{identidad_F}
	Consider $\mu >\lambda>0 $. Then for every $y\in D$
	\begin{align}\label{transformacion}
	\F(\lambda, y )=\F \left(\mu, \frac{\lambda }{\mu} y + \left(1 - \frac{\lambda}{ \mu }\right) \F(\lambda ,y)  \right).
	\end{align} 
\end{lemma}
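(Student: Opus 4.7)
The plan is to exploit the uniqueness part of Lemma \ref{Proposition2.1} by showing that $x_\lambda := \mathcal{F}(\lambda,y)$ itself already satisfies the fixed-point equation defining $\mathcal{F}(\mu, z)$, where $z := \frac{\lambda}{\mu}y + \bigl(1-\frac{\lambda}{\mu}\bigr)x_\lambda$. First I would rewrite the defining relation $\epsilon x + G(x) = \epsilon y$ in the equivalent convex-combination form
\begin{equation*}
\mathcal{F}(\epsilon, w) = \frac{\epsilon}{1+\epsilon}\, w + \frac{1}{1+\epsilon}\, T(\mathcal{F}(\epsilon, w)),
\end{equation*}
as in the proof of Lemma \ref{Proposition2.1}. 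In particular $x_\lambda = \frac{\lambda}{1+\lambda} y + \frac{1}{1+\lambda} T(x_\lambda)$.

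Next I would verify that $z$ belongs to $D$: since $\mu > \lambda > 0$, we have $\lambda/\mu \in (0,1)$, so $z$ is a genuine convex combination of $y\in D$ and $x_\lambda\in D$, and convexity of $D$ gives $z\in D$. This is important because only then does $\mathcal{F}(\mu, z)$ make sense.

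The main computational step is then to plug $x_\lambda$ and $z$ into the right-hand side of the fixed-point equation at level $\mu$ and check it returns $x_\lambda$. Substituting the expression for $z$, one gets
\begin{equation*}
\frac{\mu}{1+\mu}\, z + \frac{1}{1+\mu}\, T(x_\lambda) = \frac{\lambda}{1+\mu}\, y + \frac{\mu-\lambda}{1+\mu}\, x_\lambda + \frac{1}{1+\mu}\, T(x_\lambda),
\end{equation*}
and a short algebraic rearrangement (collecting the $x_\lambda$ term on the left and multiplying through by $1+\mu$) reduces the equality $\frac{\mu}{1+\mu} z + \frac{1}{1+\mu} T(x_\lambda) = x_\lambda$ to $(1+\lambda)x_\lambda = \lambda y + T(x_\lambda)$, which is precisely the defining equation of $\mathcal{F}(\lambda, y)$ and therefore holds.

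Finally, by the uniqueness of the point in $D$ satisfying $\mu x + G(x) = \mu z$ (Lemma \ref{Proposition2.1} applied with parameter $\mu$ and data $z$), we conclude $x_\lambda = \mathcal{F}(\mu, z)$, which is exactly \eqref{transformacion}. There is no real obstacle here: the argument is purely algebraic once the fixed-point form of the defining equation is written down, and the only subtlety worth flagging is the membership $z\in D$, which requires the hypothesis $\mu>\lambda$ so that the coefficient $1-\lambda/\mu$ is nonnegative.
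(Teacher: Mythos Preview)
Your proposal is correct and follows essentially the same strategy as the paper: verify that $z\in D$ by convexity, check that $x_\lambda=\mathcal{F}(\lambda,y)$ satisfies the defining equation \eqref{unique_solution} with parameters $(\mu,z)$, and invoke the uniqueness part of Lemma~\ref{Proposition2.1}. The only cosmetic difference is that the paper works directly with the form $\mu x + G(x)=\mu z$ while you pass through the equivalent convex-combination form, but the argument is the same.
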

\begin{proof}
	We observe that
	\begin{equation*}
	 \F(\lambda ,y)\in D, \textrm{ and } \frac{\lambda }{\mu} y + \left(1 - \frac{\lambda}{ \mu }\right) \F(\lambda ,y) \in D,
	 \end{equation*} 
	 thus \eqref{transformacion} is well-defined. \newline
	To end the proof, it is enough to verify that $\F(\lambda ,y)$ satisfies \eqref{unique_solution} with $$\epsilon = \mu \text{ and  } z=\frac{\lambda }{\mu} y + \left(1 - \frac{\lambda}{ \mu }\right) \F(\lambda ,y).$$ Indeed, 
	\begin{equation*}
	\begin{aligned}
	 \mu \F(\lambda ,y) + G(\F(\lambda ,y)) &=  \mu \F(\lambda ,y) +\lambda \F(\lambda ,y)+  G(\F(\lambda ,y))  - \lambda\F(\lambda ,y) 
	 \\&=  \mu \F(\lambda ,y) +  \lambda y - \lambda \F(\lambda ,y)\\ &= \mu \left( \frac{\lambda}{ \mu }y +  (1 -\frac{\lambda}{ \mu })\F(\lambda ,y)    \right),
	\end{aligned}
	\end{equation*}
	which ends the proof. \qed
	\end{proof}

The following proposition establishes the continuity and differentiability almost everywhere of the map $\epsilon \mapsto \F(\epsilon,x)$ for $x\in D$ fixed.
\begin{proposition}\label{Prop-AC}
	For every $\epsilon_1,\epsilon_2>0 $ and $x\in D$
	\begin{align}
	\| \F(\epsilon_2,x) - \F(\epsilon_1,x)\| \leq \frac{\vert \epsilon_2- \epsilon_1 \vert}{ \min\{\epsilon_1,\epsilon_2\}  } \| x - \F(\min\{\epsilon_1,\epsilon_2\},x)\|.
	\end{align} 
	Consequently, for every $x\in D$ the function $\F(\cdot,x)$ is locally Lipschitz and for all $x\in D$ and a.e. $t>0$
	\begin{align*}
	 \left\Vert  \frac{ d }{ d t  } \F(\cdot, x)(t)\right\Vert  \leq \frac{1 }{ t} \| x-\F(t,x)\|. 
	\end{align*}
	\end{proposition}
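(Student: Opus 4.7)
The plan is to exploit the semigroup-like identity from Lemma \ref{identidad_F} together with the firm nonexpansiveness of $\F(\epsilon,\cdot)$ from Lemma \ref{lemmacomb}. Without loss of generality assume $\epsilon_1\leq \epsilon_2$ and set $\lambda=\epsilon_1$, $\mu=\epsilon_2$. Lemma \ref{identidad_F} gives
\begin{equation*}
\F(\lambda,x)=\F\!\left(\mu,\ \tfrac{\lambda}{\mu}x+\left(1-\tfrac{\lambda}{\mu}\right)\F(\lambda,x)\right),
\end{equation*}
and since $\F(\mu,\cdot)$ is in particular nonexpansive (Lemma \ref{lemmacomb}\ref{lemmacomba}), subtracting $\F(\mu,x)$ and taking norms yields
\begin{equation*}
\|\F(\mu,x)-\F(\lambda,x)\|\leq \left\|x-\tfrac{\lambda}{\mu}x-\left(1-\tfrac{\lambda}{\mu}\right)\F(\lambda,x)\right\|=\frac{\mu-\lambda}{\mu}\|x-\F(\lambda,x)\|.
\end{equation*}
Replacing the denominator $\mu=\max\{\epsilon_1,\epsilon_2\}$ by the smaller quantity $\min\{\epsilon_1,\epsilon_2\}$ only weakens the estimate, so one recovers the claimed inequality.

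For local Lipschitzness, observe that Lemma \ref{lemmacomb} ensures that $\epsilon\mapsto \|x-\F(\epsilon,x)\|$ is decreasing on $(0,+\infty)$. Hence on any compact subinterval $[a,b]\subset (0,+\infty)$ the above inequality gives
\begin{equation*}
\|\F(\epsilon_2,x)-\F(\epsilon_1,x)\|\leq \frac{\|x-\F(a,x)\|}{a}\,|\epsilon_2-\epsilon_1|,
\end{equation*}
which is the Lipschitz estimate. In particular, $\F(\cdot,x)$ is absolutely continuous on every compact subinterval of $(0,+\infty)$, hence differentiable almost everywhere.

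Finally, at a differentiability point $t>0$, apply the pointwise estimate with $\epsilon_1=t$ and $\epsilon_2=t+h$ for $h>0$ small:
\begin{equation*}
\frac{\|\F(t+h,x)-\F(t,x)\|}{h}\leq \frac{1}{t}\|x-\F(t,x)\|.
\end{equation*}
Letting $h\to 0^+$ produces the desired bound; the symmetric case $h\to 0^-$ gives the same limit after using the continuity of $\F(\cdot,x)$ to pass $\|x-\F(t+h,x)\|/(t+h)\to \|x-\F(t,x)\|/t$. The only real subtlety is invoking the correct identity from Lemma \ref{identidad_F} in the right direction; once this is set up the remaining steps are mechanical consequences of nonexpansiveness and standard differentiation of locally Lipschitz functions.
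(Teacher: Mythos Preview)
Your proof is correct and follows the same route as the paper: invoke Lemma~\ref{identidad_F} to rewrite $\F(\lambda,x)$ as $\F(\mu,\cdot)$ evaluated at a convex combination, then apply the nonexpansiveness of $\F(\mu,\cdot)$ from Lemma~\ref{lemmacomb}\ref{lemmacomba}. Your argument is in fact slightly more detailed than the paper's, which stops after the main inequality and leaves the local Lipschitz continuity and the derivative bound implicit; your use of the monotonicity of $\epsilon\mapsto\|x-\F(\epsilon,x)\|$ to get a uniform Lipschitz constant on compact subintervals, and the subsequent limiting argument, fill this in cleanly.
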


\begin{proof} Fix $x\in D$ and assume that $\epsilon_2 >\epsilon_1$. Then, according to Lemma \ref{identidad_F},
	\begin{align*}
	\| \F({\epsilon_2},x) -\F({\epsilon_1},x)\| = \left\Vert \F({\epsilon_2},x   )- \F\left({\epsilon_2},\frac{\epsilon_1}{ \epsilon_2 }x   +(1-  \frac{\epsilon_1}{ \epsilon_2 }) \F(\epsilon_2,x) \right) \right\Vert. 
	\end{align*}
	Next, due to Lemma \ref{lemmacomb} \ref{lemmacomba}, we know that $\F(\epsilon_2,\cdot)$ is nonexpansive. Thus,
		\begin{align*}
	\| \F({\epsilon_2},x) -\F({\epsilon_1},x)\| &\leq \left( 1 - \frac{\epsilon_1}{ \epsilon_2 }\right)\, \|x- \F(\epsilon_2,x)\|=\frac{ \left(\epsilon_2 -\epsilon_1\right)}{ \epsilon_2} \|x- \F(\epsilon_2,x)\|. 
	\end{align*}
Repeating the argument for $\epsilon_1 >\epsilon_2$, we obtain the result. \qed
	\end{proof}

\section{Dynamical systems associated with nonexpansive operators}\label{Nonexpansive}
In this section, let us formally establish some convergence result to  the following dynamical system
\begin{equation}\label{PD}
\left\{
\begin{aligned}
-\dot{x}(t)&=x(t)-T(x(t))& \textrm{ a.e. } t\geq 0,\\
x(0)&=x_0\in D,
\end{aligned}
\right.
\end{equation}
where $T\colon D \to D$ is a nonexpansive operator defined over a closed and convex set $D\subset \H$. The dynamical system \eqref{PD} was considered in \cite{Bot2017} for a non-expansive operator $T\colon \H \to \H$ defined in the whole space. We extend this result for a merely closed and convex set $D\subset \H$.  The results established here will be used to compare with the Tikhonov-like regularization proposed in this article.

The following proposition establishes the well-posedness and invariance of \eqref{PD}.
\begin{proposition}\label{FijoD}
 If $T\colon D \to D$ is a nonexpansive operator, then the dynamical system \eqref{PD} admits a unique solution $x\in \operatorname{AC}_{\operatorname{loc}}\left(\mathbb{R}_+;\H\right)$. Moreover, this solution satisfies $x(t)\in D$ for all $t\geq 0$.
\end{proposition}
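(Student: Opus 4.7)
The plan is to reduce the problem to a globally Lipschitz ODE on $\H$ by extending $T$, solve it by classical Cauchy-Lipschitz, and then prove the invariance of $D$ via a Gronwall argument applied to the squared distance to $D$.

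First, I would extend $T$ to the whole space by setting $\tilde{T}(x) := T(\operatorname{proj}_D(x))$ for $x\in \H$. Since $\operatorname{proj}_D$ is $1$-Lipschitz (it is firmly nonexpansive) and $T$ is nonexpansive on $D$, the map $\tilde{T}\colon \H\to \H$ is nonexpansive and coincides with $T$ on $D$. Consequently, the vector field $F(x):=\tilde{T}(x)-x$ is $2$-Lipschitz on $\H$, and the classical Cauchy-Lipschitz theorem yields a unique global solution $x\in C^1(\mathbb{R}_+;\H)\subset \operatorname{AC}_{\operatorname{loc}}(\mathbb{R}_+;\H)$ of
\begin{equation*}
\dot{x}(t)=\tilde{T}(x(t))-x(t),\quad x(0)=x_0.
\end{equation*}

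Next, I would prove that the trajectory stays in $D$. Define $\varphi(t):=\tfrac{1}{2}d_D^2(x(t))$. Since $d_D^2$ is $C^1$ with $\nabla d_D^2(x)=2(x-\operatorname{proj}_D(x))$ and $x(\cdot)$ is $C^1$, the chain rule gives
\begin{equation*}
\varphi'(t)=\langle x(t)-\operatorname{proj}_D(x(t)),\tilde{T}(x(t))-x(t)\rangle.
\end{equation*}
Writing $\tilde{T}(x(t))-x(t)=\bigl(\tilde{T}(x(t))-\operatorname{proj}_D(x(t))\bigr)-\bigl(x(t)-\operatorname{proj}_D(x(t))\bigr)$ and noticing that $\tilde{T}(x(t))=T(\operatorname{proj}_D(x(t)))\in D$, the projection inequality \eqref{inc-convexo} applied with $y=\tilde{T}(x(t))$ yields
\begin{equation*}
\langle x(t)-\operatorname{proj}_D(x(t)),\tilde{T}(x(t))-\operatorname{proj}_D(x(t))\rangle\leq 0,
\end{equation*}
so $\varphi'(t)\leq -\|x(t)-\operatorname{proj}_D(x(t))\|^2=-2\varphi(t)$. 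Gronwall's inequality, together with $\varphi(0)=0$ (since $x_0\in D$), forces $\varphi\equiv 0$, i.e.\ $x(t)\in D$ for every $t\geq 0$.

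Finally, for uniqueness in the original problem, suppose $y\in \operatorname{AC}_{\operatorname{loc}}(\mathbb{R}_+;\H)$ is any solution of \eqref{PD} with $y(t)\in D$. Then $\tilde{T}(y(t))=T(y(t))$ a.e., so $y$ also solves the extended ODE with the same initial condition; by Cauchy-Lipschitz $y\equiv x$. The main obstacle of the argument is the invariance step, but the Gronwall bound on $d_D^2(x(\cdot))$ handles it cleanly; an alternative would be to rewrite the equation as $x(t)=e^{-t}x_0+\int_0^t e^{-(t-s)}T(x(s))ds$ and exploit that this is a convex combination lying in the closed convex set $D$, but I prefer the distance-function approach since it does not require a separate fixed-point argument.
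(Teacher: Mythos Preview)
Your proof is correct and follows essentially the same route as the paper, which refers to the proof of Proposition~\ref{Existencia-Tikhonov}: extend the dynamics to all of $\H$ via $\operatorname{proj}_D$, invoke Cauchy--Lipschitz, and then show invariance of $D$ by differentiating $\tfrac{1}{2}d_D^2(x(t))$ and using the projection inequality~\eqref{inc-convexo}. The only cosmetic difference is that the paper replaces \emph{both} occurrences of $x(t)$ on the right-hand side by $\operatorname{proj}_D(x(t))$, obtaining $\dot\psi\leq 0$, whereas you keep the linear term $-x(t)$ and get the sharper bound $\varphi'\leq -2\varphi$; since the initial value is zero, either inequality suffices.
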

\begin{proof}
	See the proof of Proposition \ref{Existencia-Tikhonov} below. \qed
\end{proof}
The following proposition establishes convergence properties of the dynamical system \eqref{PD}. Its proof follows from Proposition \ref{FijoD} and the ideas of \cite[Theorem~6]{Bot2017}, where it is established for the case $D=\H$.
\begin{proposition}\label{Propo-Bot}
	Let $T\colon D \to D$ be a nonexpansive operator such that $\operatorname{Fix}T\neq \emptyset$. Let $x(\cdot)$ be the unique solution of (\ref{PD}). Then the following assertions hold:
	\begin{enumerate}
		\item[(i)] the trajectory $x$ is bounded and $\int_0^{+\infty}\Vert \dot{x}(t)\Vert^2 dt<+\infty$;
		\item[(ii)] $\lim_{t\to +\infty}\left(T(x(t))-x(t)\right)=0$ and for all $t>0$
		$$
		\Vert x(t)-Tx(t)\Vert \leq \frac{1}{\sqrt{t}}\operatorname{dist}\left(x_0,\operatorname{Fix}T\right);
		$$
		\item[(iii)] $\lim_{t\to +\infty}\dot{x}(t)=0$;
		\item[(iv)] $x(t)$ converges weakly to a point in $\operatorname{Fix}T$, as $t\to +\infty$.
	\end{enumerate}
	Moreover, if $T$ is $\alpha$-Lipschitz with $\alpha\in [0,1)$, then the unique fixed point $x^*$ of $T$ is globally exponentially stable, that is,
	\begin{equation*}
	\begin{aligned}
	\Vert x(t)-x^*\Vert &\leq e^{-(1-\alpha)t}\Vert x_0-x^{*}\Vert  &\textrm{ for all }t\geq 0.
	\end{aligned}
	\end{equation*}
\end{proposition}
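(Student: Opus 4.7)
The plan is to fix $x^{*}\in \operatorname{Fix}T$ and run an energy argument based on the cocoercivity of $G=I-T$. Since $T$ is nonexpansive on $D$, the operator $G$ is $\tfrac{1}{2}$-cocoercive (this is standard: firm nonexpansiveness of $(\operatorname{Id}+T)/2$ translates into cocoercivity of $G$). Using $\dot{x}(t)=-G(x(t))$ and $G(x^{*})=0$, I would compute
\begin{equation*}
\tfrac{1}{2}\tfrac{d}{dt}\Vert x(t)-x^{*}\Vert^{2}=\langle \dot{x}(t),x(t)-x^{*}\rangle=-\langle G(x(t))-G(x^{*}),x(t)-x^{*}\rangle \leq -\tfrac{1}{2}\Vert G(x(t))\Vert^{2}=-\tfrac{1}{2}\Vert \dot{x}(t)\Vert^{2}.
\end{equation*}
This yields item (i): $\Vert x(t)-x^{*}\Vert$ is nonincreasing (hence $x$ is bounded), and integrating over $[0,+\infty)$ gives $\int_{0}^{+\infty}\Vert \dot{x}(s)\Vert^{2}ds\leq \Vert x_{0}-x^{*}\Vert^{2}$.

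For item (ii) the key observation is that $t\mapsto \Vert \dot{x}(t)\Vert^{2}$ is nonincreasing. Indeed, since $G$ is Lipschitz (constant $2$) and $x$ is absolutely continuous, $\dot{x}=-G\circ x$ is also absolutely continuous, and for a.e.\ $t$, by monotonicity of $G$,
\begin{equation*}
\tfrac{1}{2}\tfrac{d}{dt}\Vert \dot{x}(t)\Vert^{2}=\langle \ddot{x}(t),\dot{x}(t)\rangle = -\Big\langle \tfrac{d}{dt}G(x(t)),\dot{x}(t)\Big\rangle \leq 0,
\end{equation*}
where the last inequality is obtained by taking limits in $\langle G(x(t+h))-G(x(t)),x(t+h)-x(t)\rangle/h^{2}\geq 0$. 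Combined with the integrability of $\Vert \dot{x}\Vert^{2}$ this gives $t\Vert \dot{x}(t)\Vert^{2}\leq \int_{0}^{t}\Vert \dot{x}(s)\Vert^{2}ds\leq \Vert x_{0}-x^{*}\Vert^{2}$; minimising over $x^{*}\in \operatorname{Fix}T$ yields the claimed $1/\sqrt{t}$ rate, and (iii) is then immediate.

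For item (iv) I would apply Opial's lemma with $\operatorname{Fix}T$ as target set. Condition (a), that $\lim_{t\to +\infty}\Vert x(t)-x^{*}\Vert$ exists for every $x^{*}\in \operatorname{Fix}T$, follows from (i). Condition (b), that every weak cluster point lies in $\operatorname{Fix}T$, is obtained from the demiclosedness of $\operatorname{Id}-T$ at $0$ (standard for nonexpansive operators on closed convex sets in Hilbert spaces; $D$ is weakly closed), applied together with (ii).

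Finally, if $T$ is $\alpha$-Lipschitz with $\alpha\in[0,1)$, Banach's fixed point theorem yields the unique fixed point $x^{*}$, and I would replace cocoercivity by the direct estimate
\begin{equation*}
\tfrac{1}{2}\tfrac{d}{dt}\Vert x(t)-x^{*}\Vert^{2}=\langle T(x(t))-T(x^{*}),x(t)-x^{*}\rangle-\Vert x(t)-x^{*}\Vert^{2}\leq -(1-\alpha)\Vert x(t)-x^{*}\Vert^{2},
\end{equation*}
so that Gr\"onwall's inequality gives $\Vert x(t)-x^{*}\Vert\leq e^{-(1-\alpha)t}\Vert x_{0}-x^{*}\Vert$. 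I anticipate the main technical obstacle to be the rigorous justification that $t\mapsto \Vert \dot{x}(t)\Vert^{2}$ is nonincreasing, since it requires handling the chain rule for the composition $G\circ x$ in the absolutely continuous setting; an alternative is to use the monotonicity of $G$ directly via the finite difference inequality $\Vert \dot{x}(t)\Vert\leq \Vert \dot{x}(s)\Vert$ for $s\leq t$, obtained by passing to the limit in $\langle G(x(t))-G(x(s)),x(t)-x(s)\rangle\geq 0$.
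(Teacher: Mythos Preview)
Your argument is correct and coincides with the approach the paper invokes: the authors do not spell out a proof but simply point to the invariance result (Proposition~\ref{FijoD}) together with \cite[Theorem~6]{Bot2017}, whose proof is precisely the cocoercivity/energy estimate, decay of $\Vert\dot x\Vert$, and Opial--demiclosedness scheme you have written out. The only cosmetic remark is that your ``alternative'' for the monotonicity of $t\mapsto\Vert\dot x(t)\Vert$ should be phrased via $\Vert x(t+h)-x(t)\Vert\leq\Vert x(s+h)-x(s)\Vert$ (from monotonicity of $G$ applied to the shifted trajectory) and then divided by $h$, rather than by passing to the limit directly in the monotonicity inequality.
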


\section{Tikhonov-like regularization}\label{Tikhonov-Sec}

In this section, we study the \emph{Tikhonov-like regularization} for the projected dynamical system \eqref{PD}.  We propose the following dynamical system:
\begin{equation}\label{SDP}
\left\{
\begin{aligned}
-\dot{x}(t)&=x(t)-T_t(x(t))+\varepsilon(t)(x(t)-y(t)) & \textrm{ a.e. } t\geq 0,\\
x(0)&=x_0\in D,
\end{aligned}
\right.
\end{equation}
where $(T_t)_{t>0}$ is a family of nonexpansive operators from $D$ into $D$, $y$ and  $\varepsilon$ are  function satisfying Assumption \ref{hipo1} below. The following proposition establishes, under general assumptions, the well-posedness and the invariance for the dynamical system \eqref{SDP}.
\begin{proposition}\label{Existencia-Tikhonov}
	Assume that $(T_t)_{t>0}$ is a family of  nonexpansive operators from $D$ into $D$. If the maps $t\mapsto \varepsilon(t)$,  $t\mapsto y(t)$ and $t\mapsto T_t(x)$ are locally integrable  for all $x\in D$.   Then, the dynamical system \eqref{SDP} admits a unique solution $x\in \operatorname{AC}_{\operatorname{loc}}\left(\mathbb{R}_+;\H\right)$. Moreover, this solution satisfies $x(t)\in D$ for all $t\geq 0$. 
\end{proposition}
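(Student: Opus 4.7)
The plan is to extend the family $(T_t)$ to all of $\H$, apply a standard Cauchy--Lipschitz theorem for Carathéodory vector fields to the extended problem on $\H$, and then recover invariance of $D$ through an integrating-factor argument that exploits a convex-combination structure hidden in \eqref{SDP}.

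First, I would set $\tilde T_t := T_t \circ \operatorname{proj}_D : \H \to D$. Since $\operatorname{proj}_D$ and $T_t$ are both nonexpansive on their domains, $\tilde T_t$ is nonexpansive on $\H$ and coincides with $T_t$ on $D$. The extended right-hand side
\begin{equation*}
F(t,x) := -(1+\varepsilon(t))x + \tilde T_t(x) + \varepsilon(t) y(t)
\end{equation*}
is then measurable in $t$ for each fixed $x \in \H$ (since $t\mapsto T_t(\operatorname{proj}_D(x))$ is locally integrable by hypothesis) and Lipschitz in $x$ with modulus $2+\varepsilon(t)$, which is locally integrable. A standard Cauchy--Lipschitz theorem for Carathéodory ODEs with integrable Lipschitz modulus then yields a unique solution $x \in \operatorname{AC}_{\operatorname{loc}}(\mathbb{R}_+;\H)$ to the extended system on $\H$; linear growth of $F(t,\cdot)$ combined with a Gronwall estimate rules out blow-up and gives a global solution.

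For invariance, I would rewrite the equation as $\dot x(t) + \lambda(t) x(t) = \lambda(t) z(t)$, where $\lambda(t) := 1+\varepsilon(t)$ and
\begin{equation*}
z(t) := \tfrac{1}{\lambda(t)} \tilde T_t(x(t)) + \tfrac{\varepsilon(t)}{\lambda(t)} y(t).
\end{equation*}
This $z(t)$ is a convex combination of two points of $D$, hence $z(t)\in D$. Multiplying by the integrating factor $e^{\mu(t)}$, with $\mu(t):=\int_0^t \lambda(s)\,ds$, and integrating,
\begin{equation*}
x(t) = e^{-\mu(t)} x_0 + \int_0^t \lambda(s)\, e^{-(\mu(t)-\mu(s))}\, z(s)\, ds.
\end{equation*}
The change of variable $u=\mu(t)-\mu(s)$ shows that the nonnegative scalar weights $e^{-\mu(t)}$ and $\lambda(s)e^{-(\mu(t)-\mu(s))}\,ds$ sum to $1$, so $x(t)$ is a generalized convex combination of $x_0\in D$ and $\{z(s)\}_{s\in[0,t]}\subset D$; since $D$ is closed and convex, $x(t)\in D$.

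Finally, since $x(t)\in D$ for all $t\ge 0$, the identity $\tilde T_t(x(t))=T_t(x(t))$ holds, so $x$ solves the original system \eqref{SDP}; uniqueness for \eqref{SDP} is inherited from uniqueness for the extended system, because any solution of \eqref{SDP} automatically lives in $D$ and therefore solves the extended equation. The main obstacle is the invariance step: one must justify that $s\mapsto \lambda(s) z(s)$ is Bochner-integrable on compact intervals (which follows from continuity of $x$, nonexpansiveness of $\tilde T_t$, and the local integrability assumptions on $\varepsilon$, $y$ and $T_\cdot(x_0)$) and then recognize the integrating-factor formula as a generalized convex combination landing in $D$.
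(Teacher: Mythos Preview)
Your proof is correct and proceeds by a genuinely different route from the paper's. Both arguments begin by extending the vector field to all of $\H$ via $\operatorname{proj}_D$ and invoking a Carath\'eodory Cauchy--Lipschitz theorem, but the invariance step is handled quite differently. The paper replaces \emph{every} occurrence of $x$ in the right-hand side by $\operatorname{proj}_D(x)$, then introduces the Lyapunov function $\psi(t)=\tfrac12 d_D^2(x(t))$ and differentiates it: the two inner products that appear are both nonpositive by the variational characterization of the projection \eqref{inc-convexo} (here $y(t)\in D$ and $\varepsilon(t)\ge 0$ are used), so $\psi$ is nonincreasing and vanishes identically. You instead extend only the argument of $T_t$, which preserves the affine-in-$x$ structure of the drift; the variation-of-constants formula then expresses $x(t)$ as an integral convex combination of $x_0\in D$ and the $D$-valued curve $z(s)$, and closed convexity of $D$ finishes the job.

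Your approach is slick and explicit, and it makes the role of the convex-combination structure $\tfrac{1}{1+\varepsilon}T_t + \tfrac{\varepsilon}{1+\varepsilon} y$ transparent; it does, however, lean on the affine form of the perturbation $\varepsilon(t)(x-y(t))$ and on the Bochner-integral fact that closed convex sets are stable under probability-weighted averages. The paper's Lyapunov argument is less explicit but more robust: it would still work for perturbations of the form $\varepsilon(t)g(x,y(t))$ with $g(\cdot,y)$ monotone and $g(x,y)=0$ for $x=y\in D$, where no convex-combination formula is available.
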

\begin{proof}
	Let us consider the dynamical system
	\begin{equation}\label{SDP-2}
	\left\{
	\begin{aligned}
	-\dot{x}(t)&=\operatorname{proj}_D\left(x(t)\right)-T_t(\operatorname{proj}_D\left(x(t)\right))+\varepsilon(t)(\operatorname{proj}_D\left(x(t)\right)-y(t)) & \textrm{ a.e. } t\geq 0,\\
	x(0)&=x_0\in D,
	\end{aligned}
	\right.
	\end{equation}
	According to the classical Cauchy-Lipschitz theorem (see, e.g., \cite[Theorem 30.9, p. 819]{Schechter_1997}), the dynamical system (\ref{SDP-2}) has a unique solution $x\in\operatorname{AC}_{\operatorname{loc}}\left(\mathbb{R}_+;\H\right)$. We aim to prove that $x(t)\in D$ for all $t\geq 0$. To do that, we define the function $\psi(t):=\frac{1}{2}d_D^2(x(t))$. This function is absolutely continuous and for a.e. $t\geq 0$
	\begin{equation*}
	\begin{aligned}
	\dot{\psi}(t)&= \langle x(t) -\P_D(x(t)),\dot{x}(t)\rangle\\
	&=\langle x(t) -\P_D(x(t)) ,T_t(\P_D(x(t)))-\P_D(x(t)) \rangle\\
	&+\varepsilon(t)  \langle x(t)-\P_D(x(t)),y(t)-\P_D(x(t)) \rangle \\
	&\leq 0,
	\end{aligned}
	\end{equation*}
	where we have used the inequality \eqref{inc-convexo}. Thus, since $\psi(0)=0$, it follows that $\psi(t)=0$ for all $t\geq 0$. Therefore, $\operatorname{proj}_D\left(x(t)\right)=x(t)$ for all $t\geq 0$, which proves that $x$ is the unique solution of the dynamical system \eqref{SDP}. \qed
\end{proof}

In order to provide asymptotic convergence of the solution of the dynamical system \eqref{SDP}, we consider the following assumptions: 

\begin{assumption}\label{hipo1} Let $(T_t)_{t>0}$ be a family of nonexpansive operators from $D$ into $D$,  $\varepsilon\colon \mathbb{R}_+\to \mathbb{R}_+$ be a positive function, and $y\colon \mathbb{R}_+\to D$ satisfying 
\begin{enumerate}[label=(\alph*)]
\item\label{hipo1a} $\varepsilon$ is absolutely continuous, nonincreasing with  $\lim\limits_{t\to +\infty}\varepsilon(t)=0$ and $\int_0^{+\infty}\varepsilon(s)ds=+\infty$;
\item\label{hipo1b} the map $t\mapsto y(t)$ is locally absolutely continuous, $y(t) \to y \in D$ and $\| \dot{ y}(t)\| \to 0$ as $t\to \infty$.
\item\label{hipo1c} the map $t\mapsto T_t(x)$ belongs to $L^1_{\operatorname{loc}}\left(\mathbb{R}_+,\H\right)$ for all $x\in D$. 
\item \label{Ass:f} $\lim\limits_{t \to +\infty} \psi (t)/ \varepsilon(t)=0$, where 
\begin{align}\label{def:psi}
\psi(t):= 2\| y(t) -y\| +       w(t,  \F(\varepsilon(t),y)) + \|\dot{ y}(t)\| - \frac{\dot{\varepsilon}(t)}{\varepsilon(t)} \left( 2 \| y(t) -y\| + d_{\operatorname{Fix}T}(y) \right),
\end{align}
with  $\F ( \epsilon,y)$ the unique solution of \eqref{unique_solution} given by Lemma \ref{Proposition2.1} associated to $T$, and 
\begin{align}\label{Ass:e}
w(t,x):=\| T_t(x) - T(x) \|  \text{ for all } t \geq 0\text{ and  all } x\in D.
\end{align}

\end{enumerate}

\end{assumption}
The following proposition shows that Assumption \ref{hipo1} can be obtained through a suitable time scale reparametrization.
	\begin{proposition}\label{prop}
		Let  $(T_t)_{t>0}$ be a family of nonexpansive operators from $D$ into $D$,  $\varepsilon\colon \mathbb{R}_+\to \mathbb{R}_+$ be a positive function, and $y\colon \mathbb{R}_+\to D$ satisfying statements  \ref{hipo1a}, \ref{hipo1b} and \ref{hipo1c}  of Assumption \ref{hipo1}. Suppose that  $\lim_{ t\to \infty} {\dot{\varepsilon}(t)}/{\varepsilon^2(t)} =0$, $\lim_{ t\to \infty} t \cdot\left(  \| y(t) -y\| +\| \dot{y}(t)\| \right) =0$, and  for every compact set $K \subset D$, $\lim_{ t\to \infty} t\cdot  \sup_{x\in K}  \| T_t(x) -T(x)\|=0$. Then, the family of nonexpansive operators $\hat T_t(x):= T_{1/\epsilon(t)}(x) $ and the function $\hat{ y}(t)=y(1/\epsilon(t)) $ satisfies statement \ref{Ass:f} of Assumption \ref{hipo1}.
		\end{proposition}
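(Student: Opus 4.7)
The plan is to perform the change of variable $s:=1/\varepsilon(t)$, so that $s\to +\infty$ as $t\to +\infty$. Under this substitution one has $\hat y(t)=y(s)$ and $\hat w(t,\cdot)=w(s,\cdot)$, while the chain rule yields $\dot{\hat y}(t)=-\dot y(s)\,\dot\varepsilon(t)/\varepsilon^2(t)$. The decisive algebraic identity is $1/\varepsilon(t)=s$, which converts each hypothesis of the form ``$t\cdot f(t)\to 0$'' directly into ``$f(s)/\varepsilon(t)\to 0$''; this single mechanism drives all four estimates below.

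Then I would split the quotient $\hat\psi(t)/\varepsilon(t)$ into the four summands coming from \eqref{def:psi} and control them separately. The summand $2\|\hat y(t)-y\|/\varepsilon(t)=2s\|y(s)-y\|$ tends to zero by the hypothesis $t\|y(t)-y\|\to 0$. The summand $\|\dot{\hat y}(t)\|/\varepsilon(t)$ factors as $(s\|\dot y(s)\|)\cdot(|\dot\varepsilon(t)|/\varepsilon^2(t))$, a product of two expressions each tending to zero. Finally, $-(\dot\varepsilon(t)/\varepsilon^2(t))\cdot(2\|y(s)-y\|+d_{\operatorname{Fix}T}(y))$ tends to zero because its first factor does and the second is bounded (this step implicitly uses $\operatorname{Fix}T\neq\emptyset$, which is in any case required for $\psi$ to be finite).

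The main obstacle is the remaining summand $\hat w(t,\F(\varepsilon(t),y))/\varepsilon(t)=s\,\|T_s(\F(\varepsilon(t),y))-T(\F(\varepsilon(t),y))\|$, since the uniform-convergence hypothesis is available only on compact subsets of $D$. To exhibit such a compact set, I would combine Lemma~\ref{lemmacomb}(iv), which gives $\F(\varepsilon(t),y)\to\operatorname{proj}_{\operatorname{Fix}T}(y)$ strongly as $t\to\infty$, with Proposition~\ref{Prop-AC}, which ensures continuity of $\varepsilon\mapsto\F(\varepsilon,y)$. Thus the set $K:=\{\F(\varepsilon(t),y):t\geq 0\}\cup\{\operatorname{proj}_{\operatorname{Fix}T}(y)\}$, being the continuous image of $[0,+\infty)$ augmented by its limit point at infinity, is totally bounded and closed, hence norm-compact in $\H$. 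Applying the hypothesis $t\sup_{x\in K}\|T_t(x)-T(x)\|\to 0$ to this $K$ then yields $s\sup_{x\in K}\|T_s(x)-T(x)\|\to 0$, which bounds the offending summand and closes the argument.
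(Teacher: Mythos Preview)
Your argument is correct and follows essentially the same route as the paper's own proof: the substitution $s=1/\varepsilon(t)$, the chain rule for $\dot{\hat y}$, and the construction of the compact set $K$ containing the trajectory $\{\F(\varepsilon(t),y):t\ge 0\}$ are exactly the ingredients the authors use. Your treatment is in fact slightly more careful than theirs: you explicitly adjoin the limit point $\operatorname{proj}_{\operatorname{Fix}T}(y)$ to guarantee that $K$ is closed (the paper states compactness of the bare image, which is only relatively compact), and you handle the fourth summand $-(\dot\varepsilon/\varepsilon^2)(2\|\hat y-y\|+d_{\operatorname{Fix}T}(y))$ explicitly.
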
 
\begin{proof}
It is easy to check that $  \|\hat{ y}(t)-y\|/\epsilon(t) \to 0$, as $t\to \infty$. Moreover, $$\frac{ d}{dt} \hat{y} (t) = -\dot{y}(1/\epsilon(t)) \frac{\dot{\epsilon}(t)}{\epsilon^2(t)},$$ which shows that 
$$\frac{1}{\varepsilon(t)}\dfrac{d}{dt} \hat{y} (t) \to 0, \textrm{ as } t\to \infty.$$ Finally, since $\F(\epsilon(\cdot),y)$ is continuous and $\F(\epsilon(t),y)$ converges as $t\to \infty$ (see Lemma \ref{lemmacomb}), we have that the set $K:=\{\F(\epsilon(t),y) : t\geq 0 \}$ is compact on $\H$. Then
\begin{align*}
\frac{\|\hat T_t(\F(\epsilon(t),y)) - T(\F(\epsilon(t),y)))  \| }{\epsilon(t) }  \leq \frac{1}{\epsilon(t)} \cdot \sup_{x\in K}    \| T_{1/\epsilon(t)}(x) -T(x)\| \to 0, \textrm{ as } t\to +\infty.
\end{align*}
Therefore, $\lim\limits_{t\to \infty} \psi(t)/\epsilon(t)=0$. Thus, we have verified the statement \ref{Ass:f} of Assumption \ref{hipo1} .
	\end{proof}

\begin{remark}
	We observe that in Proposition \ref{prop},  for instance, the function  $\varepsilon(t)={(1+t)^{-\beta}}$ with $\beta\in (0,1)$ satisfies  $\lim_{t\to +\infty}{\dot{\varepsilon}(t)}/{\varepsilon^2(t)}=0$. Moreover, since the function $w$, defined in \eqref{Ass:e}, is only measurable in $t$, the limit in statement \ref{Ass:f} could be taken in a \emph{essentially sense}, that is $\text{ess}\lim\limits_{t \to +\infty} \psi (t)/ \varepsilon(t)=0$, which means for all $\gamma >0$, there exists $t_\gamma \in (0,+\infty)$ such that
	\begin{align*}
	\frac{\psi (t)}{\varepsilon(t)} \leq \gamma, \text{ for almost all }t \geq t_\gamma.
	\end{align*}
	\end{remark}

The next theorem is the main result of the paper and establishes, under mild assumptions, that the point $\operatorname{proj}_{\operatorname{Fix}T}(y)$ is globally asymptotically stable for the dynamical system \eqref{SDP}, that is, 
\begin{equation}\label{GAS}
x(t)\to {\operatorname{proj}}_{\operatorname{Fix}T}(y) \textrm{ as }  t\to +\infty.
\end{equation}

We underline that the importance of this result is that the obtained convergence in \eqref{GAS} is strong, in contrast with the dynamical system considered in Proposition  \ref{Propo-Bot}.
\begin{theorem}\label{main}
Suppose that $T\colon D \to D$ is a non-expansive operator with $\operatorname{Fix}T\neq \emptyset$. If Assumption \ref{hipo1} holds, then the unique solution $x(t)$ of \eqref{SDP} converges strongly to $\operatorname{proj}_{\operatorname{Fix}T}(y)$, as $t\to +\infty$. 
\end{theorem}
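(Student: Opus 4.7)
The plan is to compare the orbit $x(t)$ of~\eqref{SDP} with the auxiliary curve
\[
z(t):=\F(\varepsilon(t),y(t)),
\]
which, by definition of $\F$ via Lemma~\ref{Proposition2.1}, satisfies the approximate fixed-point relation $z(t)-T(z(t))=\varepsilon(t)(y(t)-z(t))$. Using nonexpansiveness of $\F(\varepsilon(t),\cdot)$ from Lemma~\ref{lemmacomb}~(i) together with $y(t)\to y$, one has $\|z(t)-\F(\varepsilon(t),y)\|\leq\|y(t)-y\|\to 0$, while Lemma~\ref{lemmacomb}~(iv) yields $\F(\varepsilon(t),y)\to\P_{\operatorname{Fix}T}(y)$. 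Hence $z(t)\to\P_{\operatorname{Fix}T}(y)$, and it suffices to show that $\|x(t)-z(t)\|\to 0$.

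To obtain this, I would set $V(t):=\tfrac12\|x(t)-z(t)\|^2$ and differentiate along~\eqref{SDP}. Monotonicity of $I-T_t$ (valid because $T_t$ is nonexpansive) yields the pivot $-\langle x(t)-z(t),x(t)-T_t(x(t))\rangle\leq -\langle x(t)-z(t),z(t)-T_t(z(t))\rangle$. Splitting $(I-T_t)z=(I-T)z+(T-T_t)z$ and inserting $(I-T)z(t)=\varepsilon(t)(y(t)-z(t))$, the two cross-terms involving $y(t)$ telescope to $-\varepsilon(t)\|x(t)-z(t)\|^{2}$. One therefore obtains the differential inequality
\[
\dot V(t)\;\leq\;-2\varepsilon(t)V(t)+\sqrt{2V(t)}\,\bigl(w(t,z(t))+\|\dot z(t)\|\bigr).
\]

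The next step is to dominate $w(t,z(t))+\|\dot z(t)\|$ by the function $\psi(t)$ of Assumption~\ref{hipo1}~\ref{Ass:f}. The bound on $w$ follows from nonexpansiveness of $T_t,T$ and of $\F(\varepsilon,\cdot)$, giving
\[
w(t,z(t))\leq w(t,\F(\varepsilon(t),y))+2\|y(t)-y\|.
\]
For $\|\dot z(t)\|$, I would split the difference quotient of $\F(\varepsilon(t),y(t))$ into a variation in the second variable (controlled by $\|y(t+h)-y(t)\|$ via Lemma~\ref{lemmacomb}~(i)) and a variation in the first variable (controlled by Proposition~\ref{Prop-AC}), then bound $\|y(t)-\F(\varepsilon(t),y(t))\|\leq\|y(t)-y\|+d_{\operatorname{Fix}T}(y)$ by applying Lemma~\ref{lemmacomb}~(iii) at the fixed point $x^{\ast}=\P_{\operatorname{Fix}T}(y)$. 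Since $\dot\varepsilon\leq 0$, adding the two estimates produces exactly $w(t,z(t))+\|\dot z(t)\|\leq\psi(t)$.

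Young's inequality then yields $\dot V+\varepsilon V\leq \psi^{2}/(2\varepsilon)$, and Gronwall with the integrating factor $E(t):=\exp\!\bigl(\int_{0}^{t}\varepsilon(s)\,ds\bigr)$ gives
\[
V(t)\;\leq\;\frac{V(0)}{E(t)}+\frac{1}{2E(t)}\int_{0}^{t} E(s)\,\varepsilon(s)\,(\psi(s)/\varepsilon(s))^{2}\,ds.
\]
Since $E(t)\to+\infty$ (because $\int_{0}^{\infty}\varepsilon=+\infty$) and $(\psi/\varepsilon)^{2}\to 0$ by Assumption~\ref{hipo1}~\ref{Ass:f}, a standard L'H\^opital-type argument (after the substitution $u=\int_{0}^{s}\varepsilon$) forces both terms to vanish. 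Hence $V(t)\to 0$ and $x(t)\to\P_{\operatorname{Fix}T}(y)$. I expect the main technical hurdle to be the estimate on $\|\dot z(t)\|$: because $\F$ depends on two time-varying arguments, one must carefully glue together the nonexpansiveness in $y$ with the one-sided Lipschitz estimate in $\varepsilon$ from Proposition~\ref{Prop-AC}, and the exact composition of $\psi$ (with its $-\dot\varepsilon/\varepsilon$ factor and the coefficient~$2$) is precisely what makes this bound tight.
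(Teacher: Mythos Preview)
Your proposal is correct and follows essentially the same route as the paper: the same auxiliary curve $z(t)=\F(\varepsilon(t),y(t))$, the same Lyapunov function $V=\theta=\tfrac12\|x-z\|^{2}$, the same use of monotonicity of $I-T_t$ plus the defining equation for $z$ to produce the key inequality $\dot\theta+2\varepsilon\theta\le\psi\sqrt{2\theta}$, and the same integrating-factor argument with the split at $t_\gamma$ to conclude. The only cosmetic difference is the endgame: the paper sets $\varphi=\sqrt{2\theta}$ and obtains the linear inequality $\dot\varphi+\varepsilon\varphi\le\psi$ directly, while you apply Young's inequality to get $\dot V+\varepsilon V\le\psi^{2}/(2\varepsilon)$; both lead to the same Gronwall estimate and the same $\psi/\varepsilon\to 0$ conclusion. (Your bound $\|y(t)-\F(\varepsilon(t),y(t))\|\le\|y(t)-y\|+d_{\operatorname{Fix}T}(y)$ via Lemma~\ref{lemmacomb}~(iii) is in fact slightly sharper than the paper's $2\|y(t)-y\|+d_{\operatorname{Fix}T}(y)$, so your claim that the sum is ``exactly $\psi$'' should read ``$\le\psi$'', which is all you need.)
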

Before giving the proof, it is important to emphasize the advantages of Theorem \ref{main} compared with Proposition \ref{Propo-Bot}. First, the trajectories of \eqref{SDP} are always defined in the set $D$. Thus, $T$ needs only to be defined in this set. Second, from the numerical point of view, the cost of discretizing \eqref{SDP} is relatively the same as discretizing the dynamical system \eqref{PD}. Third, the convergence obtained in Theorem \ref{main} is strong, while the convergence in Proposition \ref{Propo-Bot} is weak. Finally, all the trajectories of \eqref{SDP} (regardless of the starting point) converge strongly to the point $\operatorname{proj}_{\operatorname{Fix}T}(y)$.
\begin{proof} Define the operators $$G_t(x):=x-T_t(x),\; G(x):=x-T(x).$$ Since $T_t(\cdot)$ and $T$ are nonexpansive on $D$, it is clear that $G_t(\cdot)$ and $G$ are  monotone operators on $D$ (see, e.g.,  \cite[Example 20.7]{BC2017}). \newline
\noindent Consider the functions $z(t):=\F(\varepsilon(t),y(t))$ and  $$\theta(t):=\frac{1}{2}\Vert x(t)-z(t)\Vert^2.$$ According to Proposition  \ref{Prop-AC}, $\theta$ is locally absolutely continuous and for almost all $t\geq 0$, we have
\begin{equation*}
\begin{aligned}
\dot{\theta}(t)=& \left\langle x(t)-z(t),\dot{x}(t)- \frac{ d  }{dt  }  z(t) \right\rangle \\
=& \left\langle x(t)-z(t),-G_t(x(t)) +\epsilon(t)y(t) -\epsilon(t)x(t) \right\rangle  -\left\langle x(t)-z(t), \frac{ d  }{dt  }  z(t)     \right\rangle \\ 
=&   \left\langle x(t)-z(t),-G_t(x(t))+G_t(z(t))\right\rangle   +\left\langle x(t)-z(t), -G_t(z(t))+G_t(\F(\varepsilon(t),y)\right\rangle \\ & +\left\langle  x(t)-z(t),  -G_t(\F(\varepsilon(t),y)) +G(z(t))\right\rangle +\epsilon(t) \left\langle x(t)-z(t),z(t) -x(t)\right\rangle \\ & -\left\langle x(t)-z(t),  \frac{ d  }{dt  }  z(t) \right\rangle,
\end{aligned}
\end{equation*}
where we have used the definition of  $z(t)$ (see  Equation \eqref{unique_solution}). Now, let us upper estimate the right-hand side of the above inequality. First, since $G_t(\cdot)$ is monotone  on $D$ and $x(t)\in D$ for all $t\geq 0$, the following inequality holds   
$$\left\langle x(t)-z(t),-G_t(x(t))+G_t(z(t))\right\rangle\leq 0.$$
Moreover,  on the one hand, using that $T(t,\cdot)$ and $\F(\epsilon,\cdot)$ are nonexpansive (see Lemma \ref{lemmacomb})
\begin{align*}
\left|  \left\langle x(t)-z(t), G_t(z(t))- G_t(\F(\varepsilon(t),y))\right\rangle \right| &\leq \sqrt{2\theta(t)}\| G_t(z(t))- G_t(\F(\varepsilon(t),y))\|  \\
&\leq 2\sqrt{2\theta(t)} \| y(t) -y\|.
\end{align*}
On the other hand,  using \eqref{Ass:e}, we get
\begin{align*}
 \left| \left\langle x(t)-z(t), G_t(\F(\varepsilon(t),y))  - G(\F(\varepsilon(t),y))\right\rangle  \right|& \leq \sqrt{2\theta(t)} \| G_t(\F(\varepsilon(t),y))  -G(\F(\varepsilon(t),y))\| \\
 &= \sqrt{2\theta(t)}  \| T_t(\F(\varepsilon(t),y)) - T(\F(\varepsilon(t),y))\|\\
 & = {\sqrt{2\theta(t)}}  w(t,  \F(\varepsilon(t),y)),
 \end{align*}
 Now,  using  Lebourg's mean value (see, e.g., \cite[Theorem 2.3.7]{Clarke1990}), we have the following estimation 
 \begin{align*}
 \left\| \frac{ d  }{dt  }  z(t) \right\| \leq - \frac{\dot{\varepsilon}(t)}{\varepsilon(t)}   \Vert y(t)-z(t)\Vert  + \| \dot{y}(t)\|, \text{ a.e. } t\geq 0, 
 \end{align*}
 where we have used Proposition \ref{Prop-AC} and assertions i) and v) of Lemma \ref{lemmacomb}. 
 
 Finally,
\begin{align*}
 \left\langle x(t)-z(t),  \frac{ d  }{dt  }  z(t) \right\rangle &\leq  \sqrt{2\theta(t)}  \left\| \frac{ d  }{dt  }  z(t) \right\|  \\ &\leq  \sqrt{2\theta(t)} \left( - \frac{\dot{\varepsilon}(t)}{\varepsilon(t)}   \Vert y(t)-z(t)\Vert  + \| \dot{y}(t)\| \right) , \\
&\leq \sqrt{2\theta(t)} \left(  - \frac{\dot{\varepsilon}(t)}{\varepsilon(t)} \left( 2 \| y(t) -y\| + \|y-  \operatorname{proj}_{\operatorname{Fix}T}(y)\| \right)  + \| \dot{y}(t)\|\right).
\end{align*}
\noindent Thus, for a.e. $t\geq 0$
\begin{equation*}
\begin{aligned}
\dot{\theta}(t)+2\varepsilon(t)\theta(t)\leq \psi(t) \sqrt{2\theta(t)},
\end{aligned}
\end{equation*}
where $\psi$ was defined in \eqref{def:psi}. Hence, the function $\varphi(t)=\sqrt{2\theta(t)}$ satisfies 
\begin{equation*}
\begin{aligned}
\dot{\varphi}(t)+\varepsilon(t)\varphi(t)\leq \psi(t) \textrm{ for a.e. }  t\geq 0.
\end{aligned}
\end{equation*}
Therefore, for all $t\geq 0$
\begin{equation*}
\begin{aligned}
\varphi(t)\leq \exp\left(-\int_0^t\varepsilon(\tau)d\tau\right)\left(\varphi(0)+ \int_0^t  \psi(s)   \exp\left(\int_0^s\varepsilon(\tau)d\tau\right)  ds\right)
\end{aligned}
\end{equation*}
Moreover, since Assumption \ref{hipo1} holds, the right-hand side in the last inequality goes to zero. Indeed, let us denote $r(t):= \exp\left(-\int_0^t\varepsilon(\tau)d\tau\right)\left( \int_0^t  \psi(s)   \exp\left(\int_0^s\varepsilon(\tau)d\tau\right)  ds\right)$. Consider $\gamma>0$, then there exists $t_{\gamma}>0$ such that $\psi(s)/\varepsilon(s)\leq \gamma$, for almost all $s>t_{\gamma}$. Hence, for all $t\geq t_{\gamma}$
\begin{equation*}
\begin{aligned}
r(t)&= \exp\left(-\int_0^t\varepsilon(\tau)d\tau\right)\left( \int_{t_{\gamma}}^t  \psi(s)   \exp\left(\int_0^s\varepsilon(\tau)d\tau\right)  ds+\int_0^{t_{\gamma}}  \psi(s)   \exp\left(\int_0^s\varepsilon(\tau)d\tau\right)  ds\right)\\
&\leq  \exp\left(-\int_0^t\varepsilon(\tau)d\tau\right)\left(\gamma  \int_{t_{\gamma}}^t  \varepsilon(s)   \exp\left(\int_0^s\varepsilon(\tau)d\tau\right)  ds+\int_0^{t_{\gamma}}  \psi(s)   \exp\left(\int_0^s\varepsilon(\tau)d\tau\right)  ds\right).
\end{aligned}
\end{equation*}
Moreover, as $t\to +\infty$,
\begin{equation*}
\begin{aligned}
\exp\left(-\int_0^t\varepsilon(\tau)d\tau\right)\left(\int_0^{t_{\gamma}}  \psi(s)   \exp\left(\int_0^s\varepsilon(\tau)d\tau\right)  ds\right) \to 0,\\
\exp\left(-\int_0^t\varepsilon(\tau)d\tau\right)\left(\gamma  \int_{t_{\gamma}}^t  \varepsilon(s)   \exp\left(\int_0^s\varepsilon(\tau)d\tau\right)  ds\right) \to \gamma.
\end{aligned}
\end{equation*}
Therefore, for all $\gamma>0$, $\limsup_{t\to +\infty}r(t)\leq \gamma$, which proves that $\lim_{t\to +\infty}\varphi(t)=0$.
%
%
%
Finally, according to Lemma \ref{lemmacomb} i) and iv), 
\begin{align*}
\limsup_{t\to +\infty}\Vert x(t)-\operatorname{proj}_{\operatorname{Fix}T}(y)\Vert \leq &\limsup_{t\to +\infty} \Vert x(t)-\F(\varepsilon(t),y(t))\Vert \\&   +  \limsup_{t\to +\infty}\Vert \F(\varepsilon(t),y(t))-\F(\varepsilon(t),y)\Vert  \\ & + \limsup_{t\to +\infty} \Vert \F(\varepsilon(t),y)-\operatorname{proj}_{\operatorname{Fix}T}(y)\Vert \\
\leq &\limsup_{t\to +\infty}  \Vert x(t)-\F(\varepsilon(t),y(t))\Vert    +  \limsup_{t\to +\infty} \Vert y(t) - y\Vert  \\ & + \limsup_{t\to +\infty} \Vert \F(\varepsilon(t),y)-\operatorname{proj}_{\operatorname{Fix}T}(y)\Vert,
\end{align*}
which ends the proof. \qed
\end{proof}

\section{Applications}

In this section, we present two  applications of Theorem \ref{main} to the dynamical systems \eqref{DyS-Prox} and \eqref{DyS-P}, respectively.

\noindent Consider a proper, convex and lower semicontinuous function $\Phi\colon \H \to \mathbb{R} \cup \{+\infty\}$ and its proximal point operator defined in \eqref{proximal-point}. Assume that $B\colon D\subset \overline{\operatorname{dom}\Phi}\to \H$ is a $\beta$-cocoercive operator, that is,
\begin{equation*}
\begin{aligned}
\left\langle Bx-By,x-y\right\rangle & \geq \beta \Vert Bx-By\Vert^2 & \textrm{ for all } x,y\in \overline{\operatorname{dom}\Phi}.
\end{aligned}
\end{equation*} 
The following result is a well known property.
\begin{proposition}\label{proposition6}
If $\mu\in (0,2\beta)$, then the operator $T:=x\mapsto \operatorname{prox}_{\mu \Phi}\left(x-\mu Bx\right)$ is non-expansive. Moreover, for all $x,y\in D$
\begin{align}\label{ineq}
\Vert Tx-Ty\Vert^2+\mu(2\beta-\mu)\Vert Bx-By\Vert^2\leq \Vert x-y\Vert^2.
\end{align}
\end{proposition}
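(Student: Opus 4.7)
The plan is to prove the inequality first and then deduce non-expansiveness as an immediate corollary, since the coefficient $\mu(2\beta-\mu)$ is strictly positive when $\mu \in (0,2\beta)$.

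First I would record the fact that $\operatorname{prox}_{\mu\Phi}$ is non-expansive with Lipschitz constant $1$, which was already mentioned in the preliminaries after equation \eqref{proximal-point} (citing Bauschke--Combettes). Applying this to the two points $x-\mu Bx$ and $y-\mu By$ in $\H$ gives
\begin{equation*}
\|Tx-Ty\|^2 \leq \|(x-\mu Bx)-(y-\mu By)\|^2.
\end{equation*}

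Next I would expand the right-hand side using the identity $\|a-b\|^2 = \|a\|^2 - 2\langle a,b\rangle + \|b\|^2$ applied to $a = x-y$ and $b = \mu(Bx-By)$, obtaining
\begin{equation*}
\|(x-\mu Bx)-(y-\mu By)\|^2 = \|x-y\|^2 - 2\mu\langle Bx-By,x-y\rangle + \mu^2\|Bx-By\|^2.
\end{equation*}
Then I would invoke the $\beta$-cocoercivity of $B$ on $\overline{\operatorname{dom}\Phi}\supset D$, namely $\langle Bx-By,x-y\rangle \geq \beta\|Bx-By\|^2$, to estimate the cross term from above by $-2\mu\beta\|Bx-By\|^2$. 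Combining these bounds yields
\begin{equation*}
\|Tx-Ty\|^2 \leq \|x-y\|^2 - \mu(2\beta-\mu)\|Bx-By\|^2,
\end{equation*}
which is exactly the claimed inequality \eqref{ineq} after rearrangement.

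Finally, since $\mu\in(0,2\beta)$ forces $\mu(2\beta-\mu) > 0$, the term $\mu(2\beta-\mu)\|Bx-By\|^2$ is non-negative and may be discarded to conclude $\|Tx-Ty\|^2 \leq \|x-y\|^2$, proving non-expansiveness. There is no real obstacle in this argument: the only subtlety to mention is that one could also observe that $T$ is a composition of the firmly non-expansive operator $\operatorname{prox}_{\mu\Phi}$ with the averaged operator $\operatorname{Id} - \mu B$ (averagedness of $\operatorname{Id}-\mu B$ being equivalent to cocoercivity of $B$), but invoking only the non-expansiveness of $\operatorname{prox}_{\mu\Phi}$ together with cocoercivity of $B$, as above, suffices and keeps the proof self-contained.
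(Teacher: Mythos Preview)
Your argument is correct and is exactly the standard computation one has in mind here: bound $\|Tx-Ty\|$ by the $1$-Lipschitz property of $\operatorname{prox}_{\mu\Phi}$, expand the square, apply $\beta$-cocoercivity of $B$ to the cross term, and rearrange. The paper does not actually give a proof of this proposition; it is stated as a well-known property, so there is nothing to compare against beyond noting that your proof is the expected one.
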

Thus, as a  consequence of Theorem \ref{main}, we have the following theorem which extends the results from \cite{Abbas2015}.
\begin{theorem}\label{main2}
Let $ \Phi_t, \Phi : \H \to \mathbb{R} \cup\{ +\infty\}$ be a family of proper, convex, and lsc functions, $D$ be a nonempty, closed and convex set such that\{ $D\supseteq  \overline{\dom \Phi},\,    \overline{\dom \Phi_t}$, and  let $B_t, B \colon D \to \H$ be a family of $\beta$-cocoercive operators. Assume that  $y\in D$, $\mu\in (0,2\beta)$ and 
\begin{enumerate}
	\item[(i)] The function $\epsilon$ is absolutely continuous, nonincreasing with  $\lim\limits_{t\to +\infty}\varepsilon(t)=0$, $\int_0^{+\infty}\varepsilon(s)ds=+\infty$ and $\lim\limits_{t\to +\infty}{ \dot{\epsilon}(t)}/{\epsilon^2(t)}=0$.
	
	\item[(ii)] The map $t\mapsto \Phi_t(x)$ is measurable for all $x\in \H$ and there exists $\kappa\colon \mathbb{R}_+\to \mathbb{R}_+$ such that 
	\begin{align}\label{eq20}
	\Vert \operatorname{prox}_{\mu \Phi_t}(z)-\operatorname{prox}_{\mu \Phi}(z)\Vert \leq \kappa(t) (\Vert z\Vert +c_{\Phi})^{p} \textrm{ for all } (t,z)\in \mathbb{R}_+\times \H,
	\end{align}
	where  $c_{\Phi}$ and $p$ are positive constants and $\lim_{t\to +\infty}\kappa(t)/\varepsilon(t)=0$.
	
	\item[(iii)] For every compact set $K \subset D$,
		\begin{align}\label{eq}
		\lim\limits_{t\to +\infty} \frac{1}{\varepsilon(t)}\sup \limits_{x\in K} \|B_t(x) - B(x)\| =0.
	\end{align}
	\item[(iv)] { For every $x\in D$, the map $t\to   \operatorname{prox}_{\mu \Phi}\left(x-\mu B_tx\right) $ is measurable.}
\end{enumerate}

Let $x\colon [0,+\infty) \to \H$ be the unique solution of
\begin{equation}\label{dynfinal}
\left\{
\begin{aligned}
-\dot{x}(t)&=x(t)-\operatorname{prox}_{\mu \Phi_t}\left(x(t)-\mu B_tx(t)\right) +\varepsilon(t)(x(t)-y),\\
x(0)&=x_0 \in D.
\end{aligned}
\right.
\end{equation}
Then $x(t)$ converges strongly to $\operatorname{proj}_{\operatorname{zer}(\partial \Phi +B)}(y)$, as $t\to +\infty$, provided that $\operatorname{zer}(\partial \Phi +B)\neq \emptyset$.
\end{theorem}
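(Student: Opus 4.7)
The plan is to apply Theorem \ref{main} with the choice $T_t(x) := \operatorname{prox}_{\mu \Phi_t}(x - \mu B_t x)$, $T(x) := \operatorname{prox}_{\mu \Phi}(x - \mu B x)$, and the constant trajectory $y(t) \equiv y$. Proposition \ref{proposition6} yields that each $T_t$ and $T$ is nonexpansive on $D$, and the classical forward--backward identity $\operatorname{Fix}(\operatorname{prox}_{\mu \Phi} \circ (I - \mu B)) = \operatorname{zer}(\partial \Phi + B)$ identifies the target in Theorem \ref{main} with the target in the statement. It therefore suffices to verify Assumption \ref{hipo1}. Items \ref{hipo1a} and \ref{hipo1b} are immediate: \ref{hipo1a} is exactly hypothesis (i), while \ref{hipo1b} is trivial since $y(t) \equiv y$ gives $\dot{y}(t) \equiv 0$. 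For \ref{hipo1c}, measurability of $t \mapsto T_t(x)$ follows by combining (ii) (which makes $t \mapsto \operatorname{prox}_{\mu \Phi_t}(z)$ measurable for fixed $z$, as it differs from a constant by a measurable function) with (iv), through a standard Carath\'eodory composition using the $1$-Lipschitz continuity of $\operatorname{prox}_{\mu \Phi_t}$; local integrability is then controlled by \eqref{eq20} and the $(1/\beta)$-Lipschitz continuity of $B_t$.

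The actual content lies in item \ref{Ass:f}. With $y(t) \equiv y$, the function $\psi$ from \eqref{def:psi} collapses to
\begin{equation*}
\psi(t) = w(t,\F(\varepsilon(t),y)) - \frac{\dot{\varepsilon}(t)}{\varepsilon(t)}\, d_{\operatorname{Fix}T}(y).
\end{equation*}
After division by $\varepsilon(t)$, the second term vanishes thanks to $\dot{\varepsilon}(t)/\varepsilon^2(t) \to 0$ in (i). For the first term I would split, via the triangle inequality and the nonexpansiveness of $\operatorname{prox}_{\mu \Phi}$,
\begin{equation*}
w(t,x) \leq \kappa(t)\,(\|x - \mu B_t x\| + c_{\Phi})^{p} + \mu\, \|B_t x - B x\|,
\end{equation*}
and evaluate at $x = \F(\varepsilon(t),y)$. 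The crucial observation is that by items iv) and vi) of Lemma \ref{lemmacomb}, the curve $t \mapsto \F(\varepsilon(t),y)$ is continuous on $[0,+\infty)$ with limit $\operatorname{proj}_{\operatorname{Fix}T}(y)$ at $+\infty$, so its image together with this limit forms a compact set $K \subset D$. On $K$, hypothesis (iii) gives $\sup_{x\in K}\|B_t x - B x\| \to 0$; in particular $\sup_{x\in K, t \geq t_0}\|B_t x\|$ is finite for some $t_0>0$, which uniformly bounds $(\|x - \mu B_t x\| + c_\Phi)^p$ on $K$ for $t \geq t_0$. Combined with $\kappa(t)/\varepsilon(t) \to 0$ from (ii) and $\varepsilon(t)^{-1}\sup_{x\in K}\|B_t x - Bx\| \to 0$ from (iii), this yields $w(t,\F(\varepsilon(t),y))/\varepsilon(t) \to 0$.

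The main obstacle I anticipate is reconciling the moving argument $\F(\varepsilon(t),y)$ with the compact-set hypotheses (ii)--(iii), which are designed for a fixed compact subset of $D$. The argument therefore hinges on localising the trajectory inside such a compact set, and the compactness of $K$ delivered by Lemma \ref{lemmacomb} is the pivotal step. Once this is in place, the rate conditions on $\kappa$ and on $B_t - B$ close the estimate, and Theorem \ref{main} concludes that $x(t) \to \operatorname{proj}_{\operatorname{Fix}T}(y) = \operatorname{proj}_{\operatorname{zer}(\partial \Phi + B)}(y)$.
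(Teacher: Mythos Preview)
Your proposal is correct and follows essentially the same route as the paper: reduce to Theorem~\ref{main} with $T_t=\operatorname{prox}_{\mu\Phi_t}\circ(I-\mu B_t)$, $T=\operatorname{prox}_{\mu\Phi}\circ(I-\mu B)$ and $y(t)\equiv y$, split $w(t,x)$ via \eqref{eq20} and the nonexpansiveness of $\operatorname{prox}_{\mu\Phi}$, and control the resulting terms by observing that $K:=\{\F(\varepsilon(t),y):t\ge 0\}$ is compact (Lemma~\ref{lemmacomb}) so that (ii), (iii) and $\dot\varepsilon/\varepsilon^2\to 0$ force $\psi(t)/\varepsilon(t)\to 0$. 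If anything, you are more explicit than the paper in justifying the compactness of $K$ and the uniform boundedness of $\|x-\mu B_tx\|$ on $K$ needed for the $\kappa(t)$-term.
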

\begin{proof}
	Let us define  $T_t(x):= \operatorname{prox}_{\mu \Phi}\left(x-\mu B_tx\right)$ and    $T(x):= \operatorname{prox}_{\mu \Phi}\left(x-\mu Bx\right)$,{ by Proposition \ref{proposition6}, they are nonexpansive operators from $D$ to $D$}. We will verify the hypotheses $(a)$-$(d)$ of Assumption \ref{hipo1}  to apply Theorem \ref{main}. It is clear that $(a)$ and  $(b)$  hold. Moreover,  the local integrability in $(c)$ follows from \eqref{ineq}. Let us verify $(d)$. Indeed, define $z_t:=\F(\epsilon(t),y))$ for $t>0$. We have
	\begin{equation*}
	\begin{aligned}
	w(t,\F(\epsilon(t),y)) &= \| T_t( \F(\epsilon(t),y))  -T(\F(\epsilon(t),y))\| \\
	&\leq   \Vert   \operatorname{prox}_{\mu \Phi_t}\left(z_t-\mu B_t z_t\right) - \operatorname{prox}_{\mu \Phi}\left(z_t-\mu B_t z_t\right)\Vert + 	\mu \|	B_t(z_t) - B(z_t)\|\\
	&\leq \kappa(t) (\Vert z_t-\mu B_t z_t \Vert +c_{\Phi})^{p}+\mu	\|	B_t(z_t) - B(z_t)\| \\
	&\leq  \kappa(t) \sup_{x\in K}(\Vert x-\mu B_t x \Vert +c_{\Phi})^{p}+\mu\sup_{x\in K}	\|B_t(x) - B(x)\|,
	\end{aligned}
	\end{equation*}
	where $K:=\{\F(\epsilon(t),y) : t\geq 0 \}$ is compact on $\H$. Hence,  for all $t>0$,
	\begin{equation*}
	\frac{\psi(t)}{\varepsilon(t)} \leq 
	\frac{\kappa(t)}{\varepsilon(t)} \sup_{x\in K}(\Vert x-\mu B_t x \Vert +c_{\Phi})^{p}+\frac{\mu}{\varepsilon(t)}\sup_{x\in K}	\|B_t(x) - B(x)\|-\frac{\dot{\varepsilon}(t)}{\varepsilon^2(t)}d_{\operatorname{Fix}T}(t),
	\end{equation*}
	which, by virtue of $(i)$-$(iii)$, implies that $\lim_{t\to +\infty} \psi(t)/\varepsilon(t)=0$. Finally,  the result follows from Theorem \ref{main}.
	
\end{proof}
\{\begin{remark}
		
It is worth to emphasize that in the last theorem the operator $B_t$ must to be defined only in closed and convex sets containing of $\overline{\operatorname{dom}\Phi}$ and not in all the space as in \cite{Bot2017}. Moreover, the trajectory converges to a point in $\dom \Phi$, but the initial point could be outside of the set. 
\end{remark}

In the rest of this section, we present a Tikhonov-like regularization for the dynamical system \eqref{DyS-P}.  To this end, we recall the following result from \cite{Perez-Vilches2019-2}, which is a Baillon-Haddad theorem for convex functions defined in open and convex sets (see also \cite[Theorem~3.3]{BC2010} for the twice continuously differentiable case).
\begin{proposition}\label{Haddad1}
 Let $U$ be a nonempty open convex subset of $\H$, let $f\colon U\to \mathbb{R}$ be convex and Fr\'echet differentiable on $U$, and let $\beta>0$. Then $\nabla f$ is $1/\beta$-Lipschitz continuous if and only if it is $\beta$-cocoercive.
\end{proposition}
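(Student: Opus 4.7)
The easy direction (cocoercivity $\Rightarrow$ Lipschitz) is immediate from Cauchy--Schwarz: if $\langle \nabla f(x) - \nabla f(y), x-y\rangle \geq \beta\|\nabla f(x) - \nabla f(y)\|^2$, bounding the left-hand side by $\|\nabla f(x)-\nabla f(y)\|\cdot \|x-y\|$ yields $\|\nabla f(x)-\nabla f(y)\| \leq (1/\beta)\|x-y\|$. The real content lies in the converse direction.

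For Lipschitz $\Rightarrow$ cocoercivity, my plan is to reduce to the classical (whole-space) Baillon--Haddad argument through a localization-plus-chain technique. First I would establish the descent inequality on $U$: for $x,y\in U$, since $[x,y]\subset U$, integrating $\nabla f$ along the segment gives
\begin{equation*}
f(y) \leq f(x) + \langle \nabla f(x), y-x\rangle + \tfrac{1}{2\beta}\|y-x\|^2.
\end{equation*}
Combining this with the convexity bound $f(z) \geq f(x)+\langle \nabla f(x),z-x\rangle$ and inserting $z = y - \beta(\nabla f(y) - \nabla f(x))$ (together with its symmetric counterpart based at $x$), exactly as in the classical proof, produces a \emph{local} Baillon--Haddad inequality valid whenever the two gradient-step points $y - \beta(\nabla f(y) - \nabla f(x))$ and $x + \beta(\nabla f(y) - \nabla f(x))$ happen to lie in $U$.

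The main obstacle is that these two points need not belong to $U$, since $U$ is only an open convex subset of $\H$. I would handle this by a chain argument. Fix $x,y \in U$. Since $[x,y]$ is compact and $U$ is open, there exists $r>0$ with $[x,y] + r\mathbb{B} \subset U$. Choose $N \in \N$ large enough that $\|y-x\|/N < r$ and set $x_i := x + (i/N)(y-x)$ for $i=0,\ldots,N$. By the assumed $(1/\beta)$-Lipschitz continuity of $\nabla f$,
\begin{equation*}
\beta\,\|\nabla f(x_{i+1}) - \nabla f(x_i)\| \leq \|x_{i+1}-x_i\| = \|y-x\|/N < r,
\end{equation*}
so the required gradient-step points $x_i \pm \beta(\nabla f(x_{i+1}) - \nabla f(x_i))$ do lie in $U$. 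The local step therefore applies for every consecutive pair and yields
\begin{equation*}
\langle \nabla f(x_{i+1}) - \nabla f(x_i),\, x_{i+1}-x_i\rangle \geq \beta\,\|\nabla f(x_{i+1}) - \nabla f(x_i)\|^2.
\end{equation*}

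Finally, summing over $i=0,\ldots,N-1$, the left-hand side telescopes (because $x_{i+1}-x_i = (y-x)/N$) to $(1/N)\langle \nabla f(y)-\nabla f(x),\, y-x\rangle$, while the elementary Cauchy--Schwarz inequality $\sum_{i=0}^{N-1} \|a_i\|^2 \geq (1/N)\bigl\|\sum_{i=0}^{N-1} a_i\bigr\|^2$ applied to $a_i := \nabla f(x_{i+1})-\nabla f(x_i)$ bounds the sum on the right below by $(1/N)\|\nabla f(y)-\nabla f(x)\|^2$. Multiplying through by $N$ delivers $\langle \nabla f(y) - \nabla f(x), y-x\rangle \geq \beta\|\nabla f(y)-\nabla f(x)\|^2$ for every $x,y\in U$, which is the required cocoercivity.
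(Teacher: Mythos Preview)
Your argument is correct. The easy direction is clear, and for the hard direction your localization-plus-chain technique works: the descent lemma holds on $U$ by convexity of $U$, the tube $[x,y]+r\mathbb{B}\subset U$ exists by compactness of the segment, the Lipschitz bound forces the gradient-step points for consecutive nodes to stay inside this tube, the local Baillon--Haddad step then applies to each pair $(x_i,x_{i+1})$, and your telescoping plus the inequality $\sum\|a_i\|^2\geq N^{-1}\|\sum a_i\|^2$ assemble these into the global cocoercivity inequality.

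As for comparison with the paper: the paper does not prove this proposition at all. It is stated as a recalled result with a citation to \cite{Perez-Vilches2019-2} (and to \cite{BC2010} for the $C^2$ case). Your write-up therefore supplies a self-contained proof where the paper gives none. The chaining idea is the natural device to bypass the obstruction that the auxiliary points $y-\beta(\nabla f(y)-\nabla f(x))$ may leave $U$; it is elementary and avoids any extension of $f$ to the whole space, which is exactly the point of the open-set version.
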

The importance of Proposition \ref{Haddad1} is that it provides a class of cocoercive operators which are not necessarily defined in the whole space.

\noindent Recall that the set of optimal solutions for the problem
\begin{equation}\label{optim-prob}
\min_{x\in C}\varphi (x),
\end{equation}
is $\operatorname{zer}(\nabla \varphi +N(C;\cdot ))=\{z\in C \colon 0\in  \nabla \varphi(z)+N(C;z)\}$.   Moreover, under mild assumptions, the trajectories of \eqref{DyS-P} approaches weakly to an optimal solution of the optimization problem \eqref{optim-prob} (see Proposition \ref{Propo-Bot} and \cite{Antipin}). The following theorem provides a Tikhonov-like regularization for the dynamical system \eqref{DyS-P} proposed by Antipin \cite{Antipin}.
\begin{theorem}\label{main3}
Let $(C_t)_{t>0}, C$ and $D$ be nonempty, closed and convex sets such that {$  C_t,\, C \subseteq D$, and let $\varphi_t, \varphi$  be a family of $1/\beta$-Lipschitz and convex functions defined on a convex and open set containing $D$.} Assume that  $y\in D$, $\mu\in (0,2\beta)$ and 
\begin{enumerate}
	\item[(i)] The function $\epsilon$ is absolutely continuous, nonincreasing with  $\lim\limits_{t\to +\infty}\varepsilon(t)=0$, $\int_0^{+\infty}\varepsilon(s)ds=+\infty$ and $\lim\limits_{t\to +\infty} { \dot{\epsilon}(t)}/{\epsilon^2(t)}=0$.
	
	\item[(ii)] The sets $(C_t)_{t>0}$ and $C$ are uniformly bounded and $$\lim_{t\to +\infty} \frac{\operatorname{Haus}^{1/2}(C_t,C)}{\varepsilon(t)}=0,$$
	where $\operatorname{Haus}(C_t,C)$ denotes the Hausdorff distance between $C_t$ and $C$, given by 
	\begin{align*}
	\operatorname{Haus}(C_t,C)=\max\left\{ \sup_{ x\in C_t} d(x,C),  \sup_{ x\in C} d(x,C_t)   \right\}.
	\end{align*}
	
	\item[(iii)] For every compact set $K \subset D$,
		\begin{align}\label{eq}
		\lim\limits_{t\to +\infty} \frac{1}{\varepsilon(t)}\sup \limits_{x\in K} \| \nabla \varphi_t(x) - \nabla \varphi(x)\| =0.
	\end{align}
	\item[(iv)] For every $x\in D$, the map $t\to   \varphi_t(x) $ is measurable.
\end{enumerate}
Let $x\colon [0,+\infty) \to \H$ be the unique solution of
\begin{equation}\label{dynfinal}
\left\{
\begin{aligned}
-\dot{x}(t)&=x(t)-\operatorname{proj}_{C_t}\left(x(t)-\mu \nabla \varphi_t(x(t))\right) +\varepsilon(t)(x(t)-y),\\
x(0)&=x_0 \in D.
\end{aligned}
\right.
\end{equation}
Then $x(t)$ converges strongly to $\operatorname{proj}_{\operatorname{zer}(\nabla \varphi +N(C;\cdot ))}(y)$, as $t\to +\infty$, provided that $\operatorname{zer}(\nabla \varphi +N(C;\cdot ))\neq \emptyset$.
\end{theorem}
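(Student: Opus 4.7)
The plan is to apply Theorem \ref{main} with the family of nonexpansive operators
\[
T_t(x):=\operatorname{proj}_{C_t}\!\left(x-\mu\nabla\varphi_t(x)\right),\qquad T(x):=\operatorname{proj}_{C}\!\left(x-\mu\nabla\varphi(x)\right),
\]
the constant function $y(t)\equiv y$, and the given $\varepsilon(\cdot)$. By Proposition \ref{Haddad1}, each $\nabla\varphi_t$ and $\nabla\varphi$ is $\beta$-cocoercive on the open set containing $D$; then Proposition \ref{proposition6} gives that $T_t,T$ are nonexpansive, and the projections take values in $C_t,\,C\subseteq D$, so $T_t,T\colon D\to D$. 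The fact that $\operatorname{zer}(\nabla\varphi+N(C;\cdot))=\operatorname{Fix}T$ follows from the characterization of the projection and the optimality condition, so once strong convergence towards $\operatorname{proj}_{\operatorname{Fix}T}(y)$ is established via Theorem \ref{main}, the conclusion follows.

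The bulk of the argument is checking Assumption \ref{hipo1}. Items \ref{hipo1a} and \ref{hipo1b} are immediate from hypothesis (i) and the choice $y(t)\equiv y$, so $\|\dot y(t)\|=0$ and $\|y(t)-y\|=0$. For \ref{hipo1c}, I would use (iv) together with the Lipschitz continuity of $\operatorname{proj}_{C_t}$ (so that, for fixed $x$, $t\mapsto T_t(x)$ inherits measurability from $t\mapsto\nabla\varphi_t(x)$ and $t\mapsto\operatorname{proj}_{C_t}(\cdot)$; the latter's measurability is the relevant regularity behind hypothesis (ii)). The real work is \ref{Ass:f}: since $y(t)\equiv y$, the function $\psi$ simplifies to
\[
\psi(t)=w(t,\mathcal{F}(\varepsilon(t),y))-\frac{\dot\varepsilon(t)}{\varepsilon(t)}d_{\operatorname{Fix}T}(y),
\]
and, using (i), $\dot\varepsilon(t)/\varepsilon^2(t)\to 0$, so it suffices to prove $w(t,\mathcal F(\varepsilon(t),y))/\varepsilon(t)\to 0$.

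To bound $w(t,x)=\|T_t(x)-T(x)\|$, I would split it via an intermediate term $\operatorname{proj}_{C_t}(x-\mu\nabla\varphi(x))$. Nonexpansiveness of $\operatorname{proj}_{C_t}$ controls
\[
\|\operatorname{proj}_{C_t}(x-\mu\nabla\varphi_t(x))-\operatorname{proj}_{C_t}(x-\mu\nabla\varphi(x))\|\le\mu\|\nabla\varphi_t(x)-\nabla\varphi(x)\|,
\]
and for the remaining term, $\|\operatorname{proj}_{C_t}(z)-\operatorname{proj}_C(z)\|$, I would invoke the classical estimate
\[
\|\operatorname{proj}_{C_t}(z)-\operatorname{proj}_C(z)\|^{2}\le \operatorname{Haus}(C_t,C)\bigl(\,\|\operatorname{proj}_{C_t}(z)-z\|+\|\operatorname{proj}_C(z)-z\|+2\operatorname{Haus}(C_t,C)\bigr),
\]
which, under the uniform boundedness of $(C_t)_t$ and $C$ from (ii), gives $\|\operatorname{proj}_{C_t}(z)-\operatorname{proj}_C(z)\|\le M\,\operatorname{Haus}^{1/2}(C_t,C)$ for $z$ in bounded sets, with $M$ independent of $t$. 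This is exactly the reason for the square root in hypothesis (ii), and it is the main technical obstacle of the proof.

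Finally, to apply these pointwise estimates at $x=\mathcal F(\varepsilon(t),y)$, I would use Lemma \ref{lemmacomb} (continuity of $\mathcal F(\cdot,y)$ on $(0,+\infty)$ and convergence as $\varepsilon\to 0^+$) to see that the set $K:=\{\mathcal F(\varepsilon(t),y):t\ge 0\}$ has compact closure in $\H$, hence is bounded and can be used as the compact set in (iii). Combining the two estimates,
\[
\frac{w(t,\mathcal F(\varepsilon(t),y))}{\varepsilon(t)}\le \frac{\mu}{\varepsilon(t)}\sup_{x\in K}\|\nabla\varphi_t(x)-\nabla\varphi(x)\|+\frac{M\,\operatorname{Haus}^{1/2}(C_t,C)}{\varepsilon(t)},
\]
both of which tend to $0$ by (ii)--(iii). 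Together with $\dot\varepsilon(t)/\varepsilon^2(t)\to 0$ from (i), this yields $\psi(t)/\varepsilon(t)\to 0$, so Assumption \ref{hipo1}\ref{Ass:f} holds and Theorem \ref{main} gives the desired strong convergence.
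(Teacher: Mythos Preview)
Your argument is correct and follows essentially the same route as the paper: identify $T_t,T$ as forward--backward operators, use Proposition~\ref{Haddad1} and Proposition~\ref{proposition6} to obtain nonexpansiveness, and reduce the verification of Assumption~\ref{hipo1}\ref{Ass:f} to a Moreau-type estimate $\|\operatorname{proj}_{C_t}(z)-\operatorname{proj}_C(z)\|\le M\,\operatorname{Haus}^{1/2}(C_t,C)$ together with the gradient bound from (iii) on the compact set $K=\{\mathcal F(\varepsilon(t),y):t\ge 0\}$. The only structural difference is that the paper does not apply Theorem~\ref{main} directly but instead factors the argument through Theorem~\ref{main2} (taking $\Phi_t=\delta_{C_t}$, $\Phi=\delta_C$, $B_t=\nabla\varphi_t$, $B=\nabla\varphi$), so that the projection estimate appears once and for all as the verification of condition~\eqref{eq20} with $p=\tfrac12$ and $\kappa(t)=2\,\operatorname{Haus}^{1/2}(C_t,C)$; the splitting used there is via $\operatorname{proj}_{C}(x-\mu\nabla\varphi_t(x))$ rather than your $\operatorname{proj}_{C_t}(x-\mu\nabla\varphi(x))$, but this is immaterial. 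Your direct approach is slightly more self-contained, while the paper's buys modularity by isolating the prox-perturbation step in a separate theorem.
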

\begin{proof}
Let us consider $\Phi:=\delta_C$, $\Phi_t:=\delta_{C_t}$, the indicator function of $C$ and $C_t$, respectively, and the operators $B_t:=\nabla \varphi_t$  and $B:=\nabla \varphi$. By Proposition \ref{Haddad1}, we have that $B_t$ and $B$ are $\beta$-cocoercive on $D$. Then, in order to apply Theorem \ref{main2}, we have to check that \eqref{eq20} holds (the other assumptions can be checked easily). Indeed, by  \cite[Inequality (2.17) in Lemma p. 362]{Moreau1977} we have that
$$
\Vert \operatorname{proj}_{C_t}(z)-\operatorname{proj}_{C}(z)\Vert^2\ \leq 2(d_{C_t}(z)+d_C(z))\operatorname{Haus}(C_t,C) \textrm{ for all } z\in \H.
$$
Furthermore, without loss of generality, we can assume that $ c:=\sup_{ t\geq 0} \operatorname{Haus}(C_t,C) <+\infty$,   then    fixing $z_0\in C$, we have
\begin{align*}
d_{C_t}(z)+d_C(z) \leq  \operatorname{Haus}(C_t,C) +2\| z-z_0\|\leq  \operatorname{Haus}(C_t,C) +2\| z\|+2\|z_0\|.
\end{align*}
 Hence, \eqref{eq20} holds with $p=1/2$, $c_\Phi=\|z_0\|+\frac{1}{2}c$,    and $\kappa(t):= 2 \operatorname{Haus}^{1/2}(C_t,C)$.  Therefore, by virtue of Theorem \ref{main2}, we get the result.
\end{proof}

\section{Conclusions and final remarks}
In this paper, we propose a new Tikhonov-like regularization for dynamical systems associated with time-dependent non-expansive operators defined in closed and convex sets (possibly not  the whole space $\H$). Our main contribution is to prove well-posedness, invariance, and strong convergence of the proposed dynamical system to a specific point in the set $\operatorname{Fix}T$, provided provided that the latter is not empty. This result extends known results in the literature and, in particular, proposes a dynamical system whose solution is defined in the domain of the non-expansive operator $T$ and converging strongly to a fixed point. Moreover, as an application of our results, we propose Tikhonov-like regularization for two dynamical systems arising in the study of optimization problems. We expect that the discretization of the dynamical systems in this paper will help the design of algorithms to find fixed points of non-expansive operators.



\bibliography{references}
    \bibliographystyle{spmpsci}      


\end{document}